\DeclareSymbolFont{AMSb}{U}{msb}{m}{n}
\DeclareSymbolFontAlphabet{\Bbb}{AMSb}
\newtheorem{theorem}{Theorem}[section]
\newtheorem{definition}[theorem]{Definition}
\newtheorem{lemma}[theorem]{Lemma}
\newcommand{\R}{\mathbb{R}}
\newcommand{\Z}{\mathbb{Z}}
\newcommand{\N}{\mathbb{N}}
\newcommand{\B}{{\cal B}}
\begin{document}

\begin{center}
{\Large Stability of quasi-simple heteroclinic cycles} \\
\mbox{} \\
{\large L.\ Garrido-da-Silva$^{2}$ and S.B.S.D.\ Castro$^{1,2,*} $}
\end{center}

\vspace{1cm}
$^*$ Corresponding author

$^{1}$ Faculdade de Economia da Universidade do Porto,
Rua Dr.\ Roberto Frias,
4200-464 Porto,
Portugal. \\
Phone: +351 225 571 100. Fax: +351 225 505 050. \\  
ORCiD: 0000-0001-9029-6893 \\
email address:  sdcastro@fep.up.pt

$^{2}$ Centro de Matem\'atica da Universidade do Porto, 
Rua do Campo Alegre 687,
 4169-007 Porto,
 Portugal. \\  
email address:  lilianagarridosilva@sapo.pt

\vspace{1cm}

\begin{abstract}
The stability of heteroclinic cycles may be obtained from the value of the local stability index along each connection of the cycle. We establish a way of calculating the local stability index for {\em quasi-simple} cycles: cycles whose connections are $1$-dimensional and contained in flow-invariant spaces of equal dimension. These heteroclinic cycles exist both in symmetric and non-symmetric contexts.
We make one assumption on the dynamics along the connections to ensure that the transition matrices have a convenient form.
Our method applies to all simple heteroclinic cycles of type $Z$ and to various heteroclinic cycles arising in population dynamics, namely non-simple heteroclinic cycles, as well as to cycles that are part of a heteroclinic network.
We illustrate our results with a non-simple cycle present in a heteroclinic network of the Rock-Scissors-Paper game.
\end{abstract}

{\bf Mathematics Subject Classification:} 34C37, 37C29, 37C75, 37C80

{\bf Keywords:} stability, heteroclinic cycle, heteroclinic network

\vspace{1.5cm}

\section{Introduction}
Heteroclinic objects (connections, cycles, and networks) exist in a robust way provided that flow-invariant spaces exist where each heteroclinic connection is of saddle-sink type. The existence of flow-invariant spaces is a natural feature of symmetric problems, in which flow-invariant spaces arise in the form of fixed-point spaces. Recent results about how symmetry can be used to construct heteroclinic cycles can be found\footnote{Our choice of bibliography aims at being illustrative rather than comprehensive.} in Ashwin and Postlethwaite \cite{AP} and in Field \cite{Field}.
Flow-invariant spaces also occur naturally in applications where the state variables are constrained by the nature of the problem. In this context, the flow-invariant spaces are not fixed-point spaces.

We address the stability of what we call {\em quasi-simple} cycles: robust heteroclinic cycles whose connections are 1-dimensional and contained in flow-invariant spaces of equal dimension. These flow-invariant spaces are not necessarily fixed-point spaces so that quasi-simple cycles may also be heteroclinic cycles arising in a non-symmetric setting.

In the context of symmetry, Krupa and Melbourne \cite{KrupaMelbourne2004} have defined a heteroclinic cycle in $\R^4$ to be simple if the connections between consecutive equilibria lie in a 2-dimensional fixed-point subspace. Simple cycles in $\R^4$ are grouped into one of three types depending on symmetry related properties: $A$, $B$ and $C$. We are not concerned with type $A$ cycles. More recently, Podvigina \cite{Podvigina2012} studies simple cycles in dimension higher than $4$ and groups the cycles into two types: $A$, as before, and $Z$. The set of type $Z$ cycles contains all cycles of types $B$ and $C$.

Heteroclinic cycles also appear in non-symmetric problems. In Evolutionary Game Theory, see Hofbauer and Sigmund \cite{HS}, the state variables are probability vectors so that the state space is an $n$-dimensional simplex. Under the classic replicator dynamics, flow-invariant spaces arise naturally on the boundary of the simplex. The classic example of a heteroclinic cycle in this context can be found in the dynamics of the Rock-Scissors-Paper (RSP) game. This game has been used to study problems in population dynamics as well as economics. See Szolnoki {\em et al.} \cite{rucklidge} and Hofbauer and Sigmund \cite{HS2003} for surveys of the former, and Hopkins and Seymour \cite{HopkinsSeymour} for the latter.
The heteroclinic cycles in the RSP game consist of 1-dimensional connections contained in flow-invariant (not fixed-point) spaces of equal dimension, see Section~\ref{sec:RSP}.

Quasi-simple heteroclinic cycles are defined to include both the symmetric and the non-symmetric cycles described above.

The study of the stability of heteroclinic objects goes back to the work of Hofbauer and Sigmund, resorting to Lyapunov functions, and of Krupa and Melbourne \cite{KrupaMelbourne95a, KrupaMelbourne95b, KrupaMelbourne2004},  and Melbourne \cite{Melbourne1991}, via the study of return maps to cross-sections to the flow.\footnote{Our study remains close to the methods of Krupa and Melbourne.} 
The stability of heteroclinic cycles and networks is obtained from the stability of individual connections and can be quantified by the stability index introduced by Podvigina and Ashwin \cite{PodviginaAshwin2011}. 
The stability index has been defined for an arbitrary invariant set. To the best of our knowledge, for heteroclinic dynamics,\footnote{See Keller \cite{Keller} and Mohd Roslan and Ashwin \cite{RoslanAshwin} for other contexts.} it has only been calculated for some types of simple cycle in \cite{PodviginaAshwin2011} and \cite{Lohse2015}, and used by \cite{CastroLohse2014} in the context of networks.
 
In the present article, we extend the calculations to quasi-simple cycles, a much wider collection of heteroclinic cycles. These include all simple cycles of type $Z$, various non-simple cycles arising from a straightforward generalisation of a construction method in \cite{AP}, and cycles with connections contained in flow-invariant spaces which are not vector subspaces, as those arising from population dynamics. 
Podvigina \cite{Podvigina2012} establishes results concerning the fragmentary asymptotic stability of cycles of type $Z$. 
Fragmentary asymptotic stability is a weak notion of stability: a fragmentarily asymptotically stable cycle attracts a positive measure set in a small neighbourhood of the cycle.
Our method\footnote{Part of our method is an adaptation of some of the techniques of Podvigina \cite{Podvigina2012}, concerning simple cycles of type $Z$, to obtain results in a far more general context.} extends to the study of essential asymptotic stability of such cycles, a stronger stability concept, as well as stability of non-simple cycles or cycles arising in a non-symmetric context. 

Our main contribution consists in providing a method that, under one mild assumption, yields an expression for the values of the local stability index at a point of a connection. Of course, in the context of simple cycles in dimension $4$, our method recovers results previously obtained by other authors. Our Assumption A guarantees basic transition matrices\footnote{The form of a basic transition matrix is given in \eqref{eq:Mj}.} exist for representing the dynamics between incoming cross-sections at consecutive equilibria and this is a crucial element in the calculation of local stability indices.
The form for the basic transition matrices under our hypotheses may be satisfied by the dynamics of some non-simple cycles. In this case, our results still hold.
We thus confirm the views of \cite[p.\ 910]{PodviginaAshwin2011} in that `transition matrices can be used to study the stability of simple cycles in higher-dimensional systems' while contradicting their expectation that `we expect such a classification to be so complex that the results can hardly be enlightening'.

Our results are constructive in the sense that we define a function whose images provide the stability index. 
We illustrate our results by calculating the local stability indices for the connections of one of the cycles in a heteroclinic network in the 4-dimensional simplex in $\R^6$. This network has been studied numerically by Sato {\em et al.} \cite{SAC}. 

Many interesting questions concerning dynamics have a natural starting point in the study of stability, to which this article contributes.
For example, stability can provide a description of the dynamics near a heteroclinic cycle in a network by making use of the stability of individual heteroclinic connections of all the cycles in the network. See Castro and Lohse \cite{CastroLohse2014}. Stability is also essential in the study of bifurcations from heteroclinic cycles, such as the studies in Postlethwaite \cite{Postlethwaite2010}, Postlethwaite and Dawes \cite{PostlethwaiteDawes}, and Lohse \cite{Lohse2015}.

In the next section, we describe some preliminary concepts and results to be used in the sequel. Section \ref{sec:stab_cycles} is divided into four subsections, including the construction of local and global maps.
The main results are the description of the transition matrices and their properties required for the calculation of the local stability indices in Subsection \ref{sec:transition_matrix}, and Theorems \ref{thm:non-negative_entries} and \ref{th:negative_entries} in the last subsection. The following section presents an example of application. 
In the appendix, we present the very long calculations required to define the essential function in the calculation of stability indices, as well as the explicit form of this function for the case of 3 variables, used in our example.

\section{Preliminaries}\label{sec:preliminaries}

Consider a dynamical system defined by 
$\dot{x}=f(x)$
where $x \in \R^n$ and $f$ is smooth. A {\em heteroclinic cycle} consists of equilibria (also called nodes), $\xi_j$, $j=1, \hdots, m$, $m\geq1$, together with trajectories which connect them:
$$
[\xi_j \rightarrow \xi_{j+1}] \subset W^u(\xi_j) \cap W^s(\xi_{j+1}) \neq \emptyset.
$$
In order to have a cycle, we assume $\xi_{m+1}=\xi_1$. 
When we have a connected union of more than one cycle, we talk about a {\em heteroclinic network}. 

Symmetry arises when there exists a compact Lie group $\Gamma$ acting on $\R^n$  such that 
$$
f(\gamma .x) = \gamma .f(x) \;\;\;\textrm{ for all } \gamma \in \Gamma, \;\;\;\textrm{ for all } x \in \R^n.
$$
We then say that $f$ is $\Gamma$-equivariant. In this case, invariant spaces for the dynamics appear naturally as {\em fixed-point spaces} of the group action: given a subgroup $\Sigma\subset\Gamma$, we define 
$$
\textrm{Fix} \left(\Sigma\right) = \{ x \in \R^n: \textrm{ }\sigma.x=x \textrm{ for all }\sigma \in \Sigma\}.
$$

Robust heteroclinic cycles in $\R^4$ have been first described as {\em simple} by Krupa and Melbourne \cite{KrupaMelbourne2004}. A cycle in $\R^4$ is robust if each connection $[\xi_j \rightarrow \xi_{j+1}]$ is of saddle-sink type and contained in a fixed-point space $P_j=\textrm{Fix}\left(\Sigma_j\right)$ for some $\Sigma_j \subset \Gamma$. 
These authors additionally define a cycle in $\R^4$ to be simple when 
\begin{enumerate}
	\item[(a)]  $\textrm{dim}\left(P_j\right)=2$ for each $j$;
	\item[(b)]  the cycle intersects each connected component of $L_j\backslash \{ 0 \}$ in at most one point, with $L_j=P_{j-1} \cap P_j$.
\end{enumerate}
However, they use implicitly the assumption that all eigenvalues of $df(\xi_j)$ are distinct. This is clarified by Podvigina \cite{Podvigina2012} who extends to higher dimensions the definition of Krupa and Melbourne \cite{KrupaMelbourne2004}. Definition 7 in \cite{Podvigina2012}, which we use in this article, states that a heteroclinic cycle in $\R^n$ is simple if for any $j$
\begin{itemize}
	\item  all eigenvalues of $df(\xi_j)$ are distinct;
	\item  $\textrm{dim}\left(P_{j-1} \ominus L_j\right)=1$.
\end{itemize}
Here $U \ominus V$ denotes the orthogonal complement to $V$ in $U$, each $P_j$ is a fixed-point space and again $L_j=P_{j-1} \cap P_j$.
When the implicit assumption that eigenvalues of $df(\xi_j)$ are distinct fails, cycles become {\em pseudo-simple} as defined by Podvigina and Chossat \cite{PC2015}. In this case, there is always at least one equilibrium so that  $df(\xi_j)$ has a double eigenvalue.

Robust simple heteroclinic cycles have further been classified into types according to some features of the isotypic decomposition. Krupa and Melbourne \cite{KrupaMelbourne2004} divided simple cycles in $\R^4$ into three classes: $A$, $B$ and $C$.
Cycles of types $B$ and $C$, see Krupa and Melbourne \cite[Definition 3.2]{KrupaMelbourne2004}, belong to type $Z$, defined by Podvigina \cite[Definition 8]{Podvigina2012}.  

We define {\em quasi-simple} cycles so as to include all simple cycles but also to be useful in a non-symmetric context. 

Let $\hat{L}_j$ be the space connecting the node $\xi_j$ to the origin in $\R^n$. In case of simple cycles, $\hat{L}_j$ coincides with $L_j=P_{j-1}\cap P_j$.

\begin{definition}\label{quasi-simple}
A {\em quasi-simple} cycle is a robust heteroclinic cycle connecting $m < \infty$ equilibria $\xi_j \in P_j\cap P_{j-1}$ so that for all $j=1, \hdots,m$
\begin{enumerate}
	\item[(i)]  $P_j$ is a flow-invariant space,
	\item[(ii)]  $\mbox{dim }P_j=\mbox{dim }P_{j+1}$,
	\item[(iii)]  $\mbox{dim } (P_j \ominus \hat{L}_j)=1$.
\end{enumerate}
\end{definition}
Condition (iii) ensures that $[\xi_j \rightarrow \xi_{j+1}] \subset P_j$ is 1-dimensional. 

We stress that the flow-invariant spaces in Definition~\ref{quasi-simple} do not have to be fixed-point spaces. In fact, they do not have to be invariant by symmetry.
In particular, many heteroclinic cycles arising from replicator dynamics are quasi-simple. In this case, the flow-invariance property emerges from  restrictions on the state space, not because there is any symmetry present.
In Section \ref{sec:RSP}, we address an example where these flow-invariant spaces are 2-dimensional but neither are they vector spaces nor fixed-point spaces. 
Note also that, based on Podvigina \cite[Lemma 1]{Podvigina2013}, all simple heteroclinic cycles are quasi-simple.

In order to describe the stability of a cycle we calculate the stability index at a point on each of its connections. The stability index was defined by Podvigina and Ashwin \cite{PodviginaAshwin2011} to characterize the local geometry of basins of attraction of heteroclinic cycles. The stability index is constant along trajectories and can be computed with respect to a suitable transverse section to the flow, as shown in Theorems 2.2 and 2.4 in \cite{PodviginaAshwin2011}, respectively.
The stability index and its local version are defined in Definition 5 and section 2.3 in Podvigina and Ashwin \cite{PodviginaAshwin2011}, which we rewrite in Definition \ref{def:stab_index} below. 
As usual, we denote by $B_{\varepsilon}(X)$ an $\varepsilon$-neighbourhood of a (compact, invariant) set $X \subset \R^n$. We write $\B(X)$ for the basin of attraction of $X$, i.e. the set of points $x \in \R^n$ with $\omega(x) \subset X$. For $\delta>0$ the $\delta$-local basin of attraction of $X$ is $\B_\delta(X):=\{x \in \B(X):\textrm{ } \phi_t(x) \in B_\delta(X) \textrm{ for all } t>0  \}$, where $\phi_t(.)$ is the flow generated by the system of equations. By $\ell(.)$ we denote Lebesgue measure.

\begin{definition}[\cite{PodviginaAshwin2011}]\label{def:stab_index}
For $x \in X$ and $\varepsilon, \delta >0$ define 
$$
\Sigma_\varepsilon(x):=\frac{\ell(B_\varepsilon(x) \cap \B(X))}{\ell(B_\varepsilon(x))}, \qquad \Sigma_{\varepsilon,\delta}(x):=\frac{\ell(B_\varepsilon(x) \cap \B_\delta(X))}{\ell(B_\varepsilon(x))}.
$$
Then the {\em stability index} at $x$ with respect to $X$ is defined to be
$$
\sigma(x):=\sigma_+(x)-\sigma_-(x),
$$
where
$$
\sigma_-(x):= \lim\limits_{\varepsilon \to 0} \left[ \frac{\textnormal{ln}(\Sigma_\varepsilon(x) )}{\textnormal{ln}(\varepsilon)}  \right], \qquad \sigma_+(x):= \lim\limits_{\varepsilon \to 0} \left[ \frac{\textnormal{ln}(1-\Sigma_\varepsilon(x) )}{\textnormal{ln}(\varepsilon)}  \right].
$$
The convention that $\sigma_-(x)=\infty$ if $\Sigma_\varepsilon(x)=0$ for some $\varepsilon>0$ and $\sigma_+(x)=\infty$ if $\Sigma_\varepsilon(x)=1$ is introduced. Therefore, $\sigma(x) \in [-\infty, \infty]$. In the same way the {\em local stability index} at $x \in X$ is defined to be
$$
\sigma_{\textnormal{loc}}(x):=\sigma_{\textnormal{loc},+}(x)-\sigma_{\textnormal{loc},-}(x),
$$
with
$$
\sigma_{\textnormal{loc},-}(x):= \lim\limits_{\delta \to 0} \lim\limits_{\varepsilon \to 0} \left[ \frac{\textnormal{ln}(\Sigma_{\varepsilon,\delta}(x))}{\textnormal{ln}(\varepsilon)}  \right], \: \sigma_{\textnormal{loc},+}(x):= \lim\limits_{\delta \to 0} \lim\limits_{\varepsilon \to 0} \left[ \frac{\textnormal{ln}(1-\Sigma_{\varepsilon,\delta}(x))}{\textnormal{ln}(\varepsilon)}  \right].
$$
\end{definition}

We say that the stability index $\sigma_{\textnormal{loc}}(x)$ quantifies the local extent of the local basin of attraction of $X$ in the following sense. For $\sigma_{\textnormal{loc}}(x)>0$, an increasingly large portion of points in an $\varepsilon$-neighbourhood of $x$ converge to $X$ as $\varepsilon \rightarrow 0$. For $\sigma_{\textnormal{loc}}(x)<0$, the measure of the set of such points goes to zero.

Conditions for asymptotic stability of heteroclinic cycles are given by Krupa and Melbourne \cite{KrupaMelbourne95a, KrupaMelbourne2004}. 
When a heteroclinic cycle is part of a network, asymptotic stability is impossible. This in turn requires intermediate notions of stability. We are interested in two forms of non-asymptotic stability: {\em essential asymptotic stability} and {\em fragmentary asymptotic stability}, of which the first is the strongest.

\begin{definition}[Definition 1.2 in \cite{Brannath}] \label{def_eas}
A compact invariant set $X$ is called {\em essentially asymptotically stable (e.a.s.)} if it is asymptotically stable relative to a set $N \subset \R^n$ with the property that
$$
\lim\limits_{\varepsilon \to 0} \frac{\ell(B_{\varepsilon}(X) \cap N)}{\ell(B_\varepsilon(X))} = 1.
$$
\end{definition}
\begin{definition}[Definition 2 in \cite{Podvigina2012}] \label{def_fas}
A compact invariant set $X$ is called {\em fragmentarily asymptotically stable (f.a.s.)} if for any $\delta >0$
$$
\ell(\B_{\delta}(X)) >0.
$$
\end{definition}

The local stability index relates to the stability of a compact invariant set $X$ (a cycle or network) thanks to the following two results where the former is the local version of Theorem 2.3 in \cite{PodviginaAshwin2011}. See also Lohse \cite[proof of Theorem 3.1]{Lohse2015a} for a relation between local and global basins of attraction. 

\begin{lemma}\label{index_fas}
Suppose that for $x \in X$ the local stability index is defined and such that $-\infty < \sigma_{\textnormal{loc}}(x)$. Then X is f.a.s.
\end{lemma}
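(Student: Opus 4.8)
The plan is to unwind Definition~\ref{def:stab_index} until the claim reduces to the tautology that a set meeting a ball in a subset of positive measure is itself non-null. Everything hinges on extracting from the hypothesis the single fact $\sigma_{\textnormal{loc},-}(x)<\infty$. To see that this is forced, note that $\Sigma_{\varepsilon,\delta}(x)\in[0,1]$ and $\ln\varepsilon<0$ for $\varepsilon<1$, so each of $\sigma_{\textnormal{loc},+}(x)$ and $\sigma_{\textnormal{loc},-}(x)$ lies in $[0,\infty]$; if $\sigma_{\textnormal{loc},-}(x)=\infty$, then $\sigma_{\textnormal{loc}}(x)=\sigma_{\textnormal{loc},+}(x)-\sigma_{\textnormal{loc},-}(x)$ equals $-\infty$ when $\sigma_{\textnormal{loc},+}(x)<\infty$ and is the undefined expression $\infty-\infty$ otherwise, both excluded by the assumption that $\sigma_{\textnormal{loc}}(x)$ is defined and strictly larger than $-\infty$.

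Next I would bring in the monotonicity of the $\delta$-local basin: if $\delta_1\le\delta_2$ then $\B_{\delta_1}(X)\subseteq\B_{\delta_2}(X)$, since $\phi_t(x)\in B_{\delta_1}(X)$ for all $t>0$ implies $\phi_t(x)\in B_{\delta_2}(X)$ for all $t>0$. Hence $\Sigma_{\varepsilon,\delta_1}(x)\le\Sigma_{\varepsilon,\delta_2}(x)$, and dividing by $\ln\varepsilon<0$ shows that $g(\delta):=\lim_{\varepsilon\to0}\ln(\Sigma_{\varepsilon,\delta}(x))/\ln(\varepsilon)$ is non-increasing in $\delta$. Consequently $\sigma_{\textnormal{loc},-}(x)=\lim_{\delta\to0}g(\delta)=\sup_{\delta>0}g(\delta)$, so $\sigma_{\textnormal{loc},-}(x)<\infty$ yields $g(\delta)<\infty$ for \emph{every} $\delta>0$. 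Now fix $\delta>0$. If we had $\Sigma_{\varepsilon_0,\delta}(x)=0$ for some $\varepsilon_0>0$, then, since $B_\varepsilon(x)\subseteq B_{\varepsilon_0}(x)$ for $\varepsilon\le\varepsilon_0$, we would get $\Sigma_{\varepsilon,\delta}(x)=0$ for all $\varepsilon\le\varepsilon_0$, and the convention in Definition~\ref{def:stab_index}, in its local form, would force $g(\delta)=\infty$, a contradiction. Hence $\Sigma_{\varepsilon,\delta}(x)>0$ for all $\varepsilon>0$.

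It remains only to read off the conclusion: $\Sigma_{\varepsilon,\delta}(x)>0$ means $\ell\big(B_\varepsilon(x)\cap\B_\delta(X)\big)>0$, so $\ell(\B_\delta(X))>0$; as $\delta>0$ was arbitrary, $X$ is f.a.s.\ by Definition~\ref{def_fas}. I do not anticipate a serious obstacle: this is essentially the local analogue of Theorem~2.3 in \cite{PodviginaAshwin2011}, and the work is purely a careful bookkeeping of the nested limits and of the $\pm\infty$ conventions attached to $\sigma_{\textnormal{loc},\pm}$. The one step deserving attention is ``$g(\delta)<\infty$ for all $\delta>0$'', which genuinely needs the monotonicity of $g$ in $\delta$ rather than merely the existence of the outer limit; with that in hand, the passage from a finite local index to a positive-measure local basin is uniform in $\delta$.
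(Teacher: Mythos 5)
Your argument is correct and is essentially the paper's own proof, which simply notes that a finite (i.e.\ $>-\infty$) local stability index forces the $\delta$-local basin to have positive measure by definition; you have merely spelled out the bookkeeping (sign conventions on $\sigma_{\textnormal{loc},\pm}$, monotonicity in $\delta$ and $\varepsilon$, and the zero-measure convention) that the paper compresses into one line. No gap, and no genuinely different route.
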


\begin{proof}
If $-\infty < \sigma_{\textnormal{loc}}(x)$ then by definition its local basin of attraction contains a set of positive measure. This is just the definition of f.a.s.
\end{proof}

\begin{theorem}[Theorem 3.1 in \cite{Lohse2015}]\label{thm:lohse}
Let $X \subset \R^n$ be a heteroclinic cycle or network with finitely many equilibria and connecting trajectories. Suppose that\footnote{Here $\ell_1(.)$ denotes 1-dimensional Lebesgue measure.} $\ell_1(X) < \infty$ and that the local stability index $\sigma_{loc}(x)$ exists and is not equal to zero for all $x \in X$. Then, generically, we have $X$ is e.a.s. is equivalent to $\sigma_{loc}(x)>0$ along all connecting trajectories.
\end{theorem}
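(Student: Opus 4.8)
The plan is to reduce the equivalence to a pointwise dichotomy on transverse cross-sections, established from the definition of the index, and then to patch the cross-section information along the finitely many connections by a Fubini/flow-box argument, in the spirit of \cite{Lohse2015}.

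First I would record the elementary sign analysis behind Definition~\ref{def:stab_index}. Since $\Sigma_{\varepsilon,\delta}(x)\in[0,1]$ and $\ln\varepsilon<0$ for small $\varepsilon$, both $\sigma_{\textnormal{loc},-}(x)\ge 0$ and $\sigma_{\textnormal{loc},+}(x)\ge 0$; hence $\sigma_{\textnormal{loc}}(x)>0$ forces $\sigma_{\textnormal{loc},+}(x)>0$ and therefore $\Sigma_{\varepsilon,\delta}(x)\to 1$, while $\sigma_{\textnormal{loc}}(x)<0$ forces $\sigma_{\textnormal{loc},-}(x)>0$ and therefore $\Sigma_{\varepsilon,\delta}(x)\to 0$ (in the iterated sense, $\varepsilon\to 0$ then $\delta\to 0$). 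Thus, under the hypothesis $\sigma_{\textnormal{loc}}(x)\neq 0$ for all $x\in X$, every point of $X$ is of exactly one of these two types. Because $\ell_1(X)<\infty$, the set $X$ is a finite union of equilibria $\xi_1,\dots,\xi_m$ and one-dimensional connecting trajectories $[\xi_j\to\xi_{j+1}]$; and because the local stability index is constant along trajectories and may be computed on a transverse section (the local version of Theorems 2.2 and 2.4 in \cite{PodviginaAshwin2011}), each connection is entirely of the ``$\Sigma_{\varepsilon,\delta}\to 1$'' type or entirely of the ``$\Sigma_{\varepsilon,\delta}\to 0$'' type, carrying a single index value $\sigma_j$.

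Next I would set up flow-boxes. For each $j$, fix an incoming cross-section $H_j$ transverse to the flow at a point $x_j\in[\xi_j\to\xi_{j+1}]$. Away from small balls about $\xi_j$ and $\xi_{j+1}$, the flow identifies a neighbourhood of the connection diffeomorphically with $H_j\times I$ (an interval of times), with bounded Jacobian; under this identification $B_\varepsilon(x_j)$ corresponds, up to comparable constants, to an $\varepsilon$-ball in $H_j$, and $\B_\delta(X)$ to the $\delta$-local basin traced on $H_j$. Hence $\sigma_j>0$ iff the complement of the local basin in $H_j$ has vanishing relative $\varepsilon$-measure, and $\sigma_j<0$ iff the local basin itself does. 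For the implication $\sigma_{\textnormal{loc}}>0\Rightarrow$ e.a.s., I would take $N$ to be the union of the forward flow-saturations of the local basins on the $H_j$, together with small forward-invariant neighbourhoods of the equilibria, and check that $X$ is asymptotically stable relative to $N$; covering $B_\varepsilon(X)$ by the finitely many flow-boxes plus the equilibrium regions and changing variables as above bounds $\ell(B_\varepsilon(X)\cap N)/\ell(B_\varepsilon(X))$ below by a convex combination of the cross-section densities (each $\to 1$) plus an equilibrium term which, generically, is negligible as $\varepsilon\to 0$, giving density $1$ and hence e.a.s. by Definition~\ref{def_eas}. For the converse I argue by contraposition: if some $\sigma_j<0$, then on $H_j$ the local basin has relative $\varepsilon$-measure tending to $0$; any $N$ witnessing e.a.s. satisfies $N\cap B_\delta(X)\subseteq\B_{\delta'}(X)$ for a suitable $\delta'$, so $N$ has vanishing relative $\varepsilon$-measure on $H_j$, and pulling this back through the flow-box shows $\ell(B_\varepsilon(X)\cap N)/\ell(B_\varepsilon(X))\not\to 1$, contradicting e.a.s.

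The main obstacle is the patching step: transferring the pointwise cross-section statements ($\Sigma_{\varepsilon,\delta}\to 1$, resp.\ $\to 0$, along each connection) into a genuine estimate for the measure of $N$ inside the shrinking tube $B_\varepsilon(X)$, controlling the overlaps of the flow-boxes and, above all, the contributions of neighbourhoods of the equilibria, where the tube degenerates and the passage maps are singular. This is exactly where ``generically'' enters: one needs the equilibria to be hyperbolic with eigenvalue configurations for which the local passage maps have the standard (basic transition matrix) form, so that the exceptional set near each $\xi_j$ is a controlled union of cusp-shaped regions whose relative measure is governed by the same exponents as the index, and therefore does not corrupt the density computed from the connections.
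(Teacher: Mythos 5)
You should note first that the paper contains no proof of this statement: it is quoted, with attribution, from Lohse \cite[Theorem 3.1]{Lohse2015}, so there is no internal argument to compare yours with; what follows assesses your proposal on its own terms. Your overall strategy --- the sign dichotomy from Definition~\ref{def:stab_index}, constancy of the index along trajectories, transfer to cross-sections by flow boxes, and contraposition using $\ell_1(X)<\infty$ to compare tube measures --- is the right one, and the contrapositive direction ($\sigma_j<0$ for some $j$ precludes e.a.s.) is essentially sound once uniformity over a compact sub-segment of the bad connection is supplied. The genuine gap is in the direction $\sigma_{\mathrm{loc}}>0\Rightarrow$ e.a.s.: you must prove that $X$ is asymptotically stable \emph{relative to} your set $N$, and this is asserted ("check that") rather than established. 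Taking $N$ to be the forward flow-saturation of the $\delta$-local basins traced on the sections only guarantees that orbits of points of $N$ remain within $\delta$ of $X$; relative asymptotic stability requires that points of $N$ sufficiently close to $X$ stay in \emph{arbitrarily small} neighbourhoods, i.e.\ a relation between local basins at different scales, or equivalently a contraction statement for the cusp regions under the return/transition maps. This is exactly the content of the auxiliary results the paper points to (cf.\ the local-to-global basin relation via \cite{Lohse2015a}), and nothing in your sketch derives it; attraction without this uniform confinement does not give asymptotic stability relative to $N$.

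The second weak point is the patching step you yourself call "the main obstacle": converting the pointwise statements $\Sigma_{\varepsilon,\delta}(x)\to 1$ into the tube-density condition $\ell(B_\varepsilon(X)\cap N)/\ell(B_\varepsilon(X))\to 1$. The pointwise limits are not uniform along a connection as it approaches the equilibria, so the "convex combination of cross-section densities" bound fails on the parts of the tube near the $\xi_j$ unless you produce explicit local estimates there (the cusp geometry governed by the same exponents that enter the transition matrices). Invoking "generically" is not a substitute for that estimate; note moreover that the balls $B_\varepsilon(\xi_j)$ themselves contribute only $O(\varepsilon^{n})=o(\varepsilon^{n-1})$ to $\ell(B_\varepsilon(X))$ and are negligible without any genericity, so the genericity hypothesis in the theorem must be doing different work (non-degeneracy making the index/cusp description valid), which your sketch does not identify. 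As it stands, the proposal is a correct outline of the known strategy with the two decisive steps --- relative asymptotic stability with respect to $N$, and the uniform near-equilibrium estimates --- left unproved.
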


In what follows we drop the index ``loc'' for clarity, as we always refer to the local stability index.

For a (robust) quasi-simple cycle, recall that $\hat{L}_j$ is the space connecting the node $\xi_j$ to the origin in $\R^n$. When the state space is not Euclidean space, $\R^n$ is the Euclidean space of smallest dimension containing the state space. See Section \ref{sec:RSP} for such an example.
We then group the eigenvalues into four types:
\begin{itemize}
	\item  {\em radial} eigenvalues $(-r<0)$, which have eigenvectors in $\hat{L}_j$;
	\item  {\em contracting} eigenvalues $(-c<0)$, which have eigenvectors in $P_{j-1} \ominus \hat{L}_j$;
	\item  {\em expanding} eigenvalues $(e>0)$, which have eigenvectors in $P_{j} \ominus \hat{L}_j$;
	\item  {\em transverse} eigenvalues $(t\in\R)$, otherwise.
\end{itemize}
Conditions (ii) and (iii) in Definition~\ref{quasi-simple} guarantee that contracting and expanding eigenvalues are simple.

It may be that no radial and/or transverse eigenvalues exist. The two-player Rock-Scissors-Paper game in the last section provides an example where no radial eigenvalues exist because $\hat{L}_j$ is not in the state space. A cycle in the one-player Rock-Scissors-Paper in the 2-dimensional simplex has neither radial nor transverse eigenvalues.

We focus on vector fields whose eigenvalues of the Jacobian matrix at the equilibria are all real. However, considering Footnote 1 in \cite{Podvigina2012}, which also follows from the form of the local maps below, having complex radial eigenvalues does not change stability results.

\section{Stability of quasi-simple cycles}\label{sec:stab_cycles}

The standard way to address stability issues of heteroclinic cycles is to look at return maps to cross-sections placed along heteroclinic connections around the cycle. These are obtained by composition of local and global maps. The global maps take points from a neighbourhood of one equilibrium to a neighbourhood of the following equilibrium along the cycle. The local maps describe the trajectories of points near an equilibrium and depend only on the eigenvalues at that equilibrium. Given Lemma \ref{index_fas} and Theorem \ref{thm:lohse}, we can obtain information about the stability of a heteroclinic cycle by looking at the local stability indices along its connections.

\subsection{Maps between cross-sections}

Define cross-sections to the flow near an equilibrium $\xi_j$ and denote them by $H_j^{\tiny{in}}$, along an incoming connection, and by $H_j^{\tiny{out}}$, along an outgoing connection. If the cycle is part of a network and the equilibrium has more than one incoming/outgoing connection, we distinguish them with a second index. Thus, $H_j^{\tiny{in},i}$ denotes the cross-section, near $\xi_j$, to the connection $[\xi_i \to \xi_j]$ whereas $H_j^{\tiny{out},i}$ denotes the cross-section, near $\xi_j$, to the connection $[\xi_j \to \xi_i]$. \emph{Local maps} $\phi_j$, near $\xi_j$, and \emph{global maps} $\psi_j$, along a connection $[\xi_j \to \xi_{j+1}]$, are such that
$$
\phi_j: \; H_j^{\tiny{in}} \rightarrow H_j^{\tiny{out}} \;\;\; \textrm{ and } \;\;\; \psi_j: \; H_j^{\tiny{out}} \rightarrow H_{j+1}^{\tiny{in}} ,
$$
respectively.
It is well-known that not all dimensions are relevant in the study of stability and these maps can be analysed in dimension lower than that of the state space (see \cite{KrupaMelbourne2004}). For $1$-dimensional connections the relevant dimension is equal to the number of transverse eigenvalues plus one. For a quasi-simple cycle the number of transverse eigenvalues is the same at every equilibrium so that the composition of these maps is well-defined. 

Let $n_t$ be the number of transverse eigenvalues at each node. We can then restrict the cross-sections above to an $(n_t+1)$-dimensional subspace. In order to keep notation sufficiently simple, we preserve the use of $H_j^{\tiny{in}}$ and $H_j^{\tiny{out}}$ for the $(n_t+1)$-dimensional subspace of each of them. From now on, we always work in the relevant $(n_t+1)$-dimensional subspace. We further write $\R^{1+n_t}$ or $\R^N$ ($N=n_t+1$) in place of a cross-section when no restrictions are applied and no confusion arises. 
In this subspace, we use the coordinates $(w,\boldsymbol{z})$, where $w$ is related to the expanding direction at $\xi_j$, $\boldsymbol{z}$ is $n_t$-dimensional and related to the transverse directions to the connection $[\xi_j \rightarrow \xi_{j+1}]$.

The composition $g_j=\psi_j \circ \phi_j$ describes the trajectory of points from $H_j^{\tiny{in}}$ to $H_{j+1}^{\tiny{in}}$. 
The local maps $\phi_j$ are obtained by looking at the linearisation of the flow near $\xi_j$, as presented in the next subsection. The global maps $\psi_j$ are homeomorphisms and depend on the type of connection along which they describe the dynamics. 

In what follows, we make the following assumption:

\paragraph{Assumption A:} The global maps consist of a rescaled permutation of the local coordinate axes.
\bigbreak

The results in \cite{KrupaMelbourne2004} and \cite{Podvigina2012} are a consequence of the symmetry of the problem and provide a way of checking Assumption A. In fact, Assumption A always holds for simple cycles of type $Z$. In the context of non-simple cycles, \cite[Remark, p.\ 1603]{AC} also argue that the global maps are a permutation, without rescaling.

A \emph{return} or \emph{Poincar\'e map} for a cycle connecting $m$ equilibria is given by
\begin{equation}\label{pi_map}
\pi_j:H^{in}_j \rightarrow H^{in}_j, \;\;\;\; \pi_j=g_{j-1} \circ \hdots \circ g_1 \circ g_m \circ \hdots \circ g_{j+1} \circ g_j.
\end{equation}
Stability properties of an invariant object can be obtained by looking at its $\delta$-local basin of attraction. 
In order to establish any type of stability of a cycle, we make use of the fact that the $\delta$-local basin of attraction of a cycle can be related to that of a fixed-point (without loss of generality, the origin) of a suitable collection of return maps. The study of stability can be achieved by iterating the return maps as well as the maps we call `partial turns', described as follows 
\[
g_{\left(l,j\right)}: H_j^{in} \rightarrow H_{l+1}^{in}, \;\;\;\;
g_{\left(l,j\right)}=\begin{cases}
g_{l}\circ\cdots\circ g_{j}, & l>j\\
g_{l}\circ\cdots\circ g_{1}\circ g_{m}\circ\cdots\circ g_{j}, & l<j\\
g_{j}, & l=j.
\end{cases}
\]
Using coordinates $(w,\boldsymbol{z}) \in \R^{1+n_t}$ in $H_j^{in}$, for $\delta>0$ we define the $\delta$-local basin of attraction of the origin in $\R^{1+n_t}$ for the map $\pi_j$ (see \cite[Definition 10]{Podvigina2012}) to be
\begin{equation}
\begin{aligned}\mathcal{B}_{\delta}^{\pi_{j}}=\Big\{&  \left(w,\boldsymbol{z}\right)\in\mathbb{R}^{1+n_t}:\left\Vert g_{\left(l,j\right)}\circ\pi_{j}^{k}\left(w,\boldsymbol{z}\right)\right\Vert <\delta\textrm{ for all } l=1,\ldots,m,\;k\in\mathbb{N}_{0}  \\
&  \left. \textrm{ and } \lim_{k\rightarrow \infty} \left\Vert g_{\left(l,j\right)}\circ\pi_{j}^{k}\left(w,\boldsymbol{z}\right) \right\Vert =0\textrm{ for all } l=1,\ldots,m \right\}.
\end{aligned}
\label{eq:Basin}
\end{equation}
Note that $g_{\left(l,j\right)}\circ\pi_{j}^{k}:H_j^{\tiny{in}} \rightarrow H_{l+1}^{\tiny{in}}$ contains $k$ full returns from $\xi_j$ plus a partial turn up to $\xi_{l+1}$.

\subsection{The local maps}

The local maps $\phi_j: H_j^{\tiny{in}} \rightarrow H_j^{\tiny{out}}$ are constructed by integrating the flow linearised about $\xi_j$. 
Because all eigenvalues are assumed to be real, for a quasi-simple cycle there are one contracting eigenvalue ($-c_j$), one expanding eigenvalue ($e_j$), $n_r$ radial eigenvalues ($-r_{j,l}$, $l=1, \hdots, n_r$), and $n_t=n-n_r-2$ transverse eigenvalues ($t_{j,s}$, $s=1, \hdots, n_t$). 
The constants $c_j$, $e_j$ and $r_{j,l}$ are positive but $t_{j,s}$ can have either sign. We assume that Ruelle's \cite{ruelle} sufficient condition for linearization of the flow around each node is satisfied.
Restricting to the relevant $(n_t+1)$-dimensional subspace of the cross-sections, 
we obtain\footnote{Because all points in each orthant of $\R^{n-1}$ of $H^{in}_j$ follow the same path (see \cite[p. 1894]{Podvigina2012}), we only consider the dynamics in the positive orthant.}
\begin{equation}\label{eq:local_map}
\phi_j(w,\boldsymbol{z}) = \left( v_{0,j}w^{\frac{c_j}{e_j}}, z_1w^{-\frac{t_{j,1}}{e_j}}, \hdots, z_{n_t}w^{-\frac{t_{j,n_t}}{e_j}} \right)
\end{equation}
where $v_{0,j}$ accounts for the coordinate of the initial condition in the contracting direction at $\xi_j$. If $t_{j,s}$ is positive for some $s=1, \hdots, n_t$, then the domain of $\phi_j$ needs to be restricted to a subset of $H_j^{\tiny{in}}$.

\subsection{Transition matrices}\label{sec:transition_matrix}

The maps $g_j:H^{in}_j \rightarrow H^{in}_{j+1}$ may conveniently be expressed in terms of $N \times N$ matrices ($N=n_t+1$, as usual) as follows
\begin{equation}
g_{j}\left(w,\boldsymbol{z}\right)=A_{j}B_j\left[
\begin{array}{c}
v_{0,j}w^{{\frac{c_{j}}{e_{j}}}}\\
\left\{ z_{s}w^{-\frac{t_{j,s}}{e_{j}}}\right\} 
\end{array}\right], 
\;\;s=1,\ldots,n_{t},\;\;j=1,\ldots,m,
\label{eq:gj}
\end{equation}
where the column-vector gives the image of $(w,\boldsymbol{z})$ by $\phi_j$ in \eqref{eq:local_map}.
Following Assumption A, the matrices $A_j$ are permutation matrices and the matrices $B_j$ are diagonal and represent a rescaling of coordinates:
$$
B_j=\left[\begin{array}{cccc}
a_{j,1} & 0 &  \ldots & 0\\
0 & a_{j,2} &  \ldots & 0\\
. & . &  \ldots & .\\
0 & 0 &  \ldots & a_{j,N}
\end{array}\right].
$$
In particular, all constants $a_{j,i}$, $i=1,\ldots,N$, are positive due to invariance of $P_j$ and uniqueness of solutions. 

Recall that when the global map in Assumption A is the identity, the matrix $A_j$ need not be the identity since it also accounts for the permutation between the local bases in $H_j^{\tiny{out}}$ and $H_{j+1}^{\tiny{in}}$.  For instance, for simple cycles of type $B$ the matrix $A_j$ is the identity; however, for simple cycles of type $C$, it is not. When the cycle is simple and $n_t=1$, the global maps are given in \cite[Proposition 4.1]{KrupaMelbourne2004}.

As in \cite{Podvigina2012}, we use new coordinates of the form\footnote{Below we always use boldtype to indicate an $N$-dimensional vector. In particular, $\boldsymbol{-\infty}= \left(-\infty, \hdots, -\infty\right)$.}
\begin{equation}\label{eq:change}
\boldsymbol{\eta}=\left(\ln w,\ln z_{1},\ldots,\ln z_{n_{t}}\right).
\end{equation}
The maps $g_j:\boldsymbol{\eta}\mapsto\boldsymbol{\eta}$ become linear and we denote them by $\mathcal{M}_{j}$ where
\begin{equation}\label{eq:poincare}
\mathcal{M}_{j}\boldsymbol{\eta}=M_{j}\boldsymbol{\eta}+F_{j}
\end{equation}
with $M_j$ and $F_j$ given by
\begin{equation}\label{eq:Mj}
M_{j}=A_{j}\left[\begin{array}{ccccc}
b_{j,1} & 0 & 0 & \ldots & 0\\
b_{j,2} & 1 & 0 & \ldots & 0\\
b_{j,3} & 0 & 1 & \ldots & 0\\
. & . & . & \ldots & .\\
b_{j,N} & 0 & 0 & \ldots & 1
\end{array}\right],\;\;\; F_{j}=A_j \left[\begin{array}{c}
\ln v_{0,j}+\ln a_{j,1}\\
\ln a_{j,2}\\
\vdots\\
\ln a_{j,N}
\end{array}\right].
\end{equation}
Due to Assumption A, the matrix $A_j$ is a permutation matrix and remains the same in both \eqref{eq:gj} and \eqref{eq:Mj}.

The entries of $M_j$ depend on the eigenvalues at node $\xi_j$ as follows
$$
b_{j,1}=\frac{c_{j}}{e_{j}}, \;\; b_{j,s+1}=-\frac{t_{j,s}}{e_{j}}, \;\; s=1,\ldots,n_{t}, \;\; j=1,\ldots,m.
$$

In \cite{Podvigina2012}, the matrices $M_j$ are called {\em basic transition matrices} of the maps $g_j$. Given the form of the local maps in \eqref{eq:local_map} and the change of coordinates \eqref{eq:change} we have just proved the following 

\begin{theorem}\label{th:matrix}
For a quasi-simple heteroclinic cycle satisfying Assumption A, the transition between incoming sections at consecutive equilibria can be described using basic transition matrices of the form \eqref{eq:Mj}.
\end{theorem}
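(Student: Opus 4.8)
The plan is to observe that this theorem is essentially a bookkeeping statement: it collects the computations already carried out in Sections 3.1--3.3 and asserts that, once Assumption A is in force, the composition $g_j = \psi_j \circ \phi_j$ admits the matrix representation \eqref{eq:Mj}. So the proof is short and amounts to tracking coordinates through two maps and one change of variables. First I would recall the explicit form of the local map $\phi_j$ in \eqref{eq:local_map}, which is forced by Ruelle's linearisation hypothesis and the grouping of eigenvalues into contracting, expanding, radial and transverse types: on the relevant $(n_t+1)$-dimensional section, $\phi_j(w,\boldsymbol z) = (v_{0,j} w^{c_j/e_j}, \{z_s w^{-t_{j,s}/e_j}\})$. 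This is the column vector appearing in \eqref{eq:gj}.

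Next I would invoke Assumption A directly: the global map $\psi_j \colon H_j^{\text{out}} \to H_{j+1}^{\text{in}}$ is a rescaled permutation of the local coordinate axes, so in the chosen bases it acts as $A_j B_j$ with $A_j$ a permutation matrix and $B_j = \mathrm{diag}(a_{j,1}, \dots, a_{j,N})$. Here one must be careful to note — as the text already does — that $A_j$ absorbs \emph{both} the permutation coming from $\psi_j$ and the identification between the local basis of $H_j^{\text{out}}$ and that of $H_{j+1}^{\text{in}}$, which is why $A_j$ need not be the identity even when $\psi_j$ is. Positivity of the $a_{j,i}$ follows from flow-invariance of $P_j$ together with uniqueness of solutions, since trajectories starting in the positive orthant stay in a fixed orthant. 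This yields \eqref{eq:gj}.

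Then I would perform the logarithmic change of coordinates $\boldsymbol\eta = (\ln w, \ln z_1, \dots, \ln z_{n_t})$ from \eqref{eq:change}. Applying $\ln$ componentwise to the column vector in \eqref{eq:gj} turns each monomial into an affine function of $\boldsymbol\eta$: the first component $\ln v_{0,j} + (c_j/e_j)\ln w$ and the $(s{+}1)$-th component $\ln z_s - (t_{j,s}/e_j)\ln w$. Collecting the linear parts gives the lower-triangular matrix with first column $(c_j/e_j, -t_{j,1}/e_j, \dots, -t_{j,n_t}/e_j)^{\!\top}$ and identity elsewhere — exactly the matrix multiplying $A_j$ in \eqref{eq:Mj} with $b_{j,1} = c_j/e_j$ and $b_{j,s+1} = -t_{j,s}/e_j$ — while the rescaling $B_j$ becomes the additive term, so that $\ln$ of the $i$-th diagonal entry contributes $\ln a_{j,i}$ to $F_j$ (with the extra $\ln v_{0,j}$ in the first slot coming from $\phi_j$). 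Finally, since $A_j$ is a permutation, it commutes with the operation of taking logarithms componentwise, so the same permutation matrix $A_j$ appears in front of both the linear part $M_j$ and the constant part $F_j$, giving \eqref{eq:poincare}--\eqref{eq:Mj}. I do not anticipate a genuine obstacle: the only point requiring care is the double role of $A_j$ and checking that the permutation really does pass through the logarithm unchanged (it does, because permuting coordinates and then taking logs equals taking logs and then permuting). Everything else is the routine algebra already displayed in the subsections, so the theorem follows, as the text says, by assembling \eqref{eq:local_map}, \eqref{eq:gj} and \eqref{eq:change}.
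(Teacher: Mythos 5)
Your proposal is correct and follows essentially the same route as the paper: the paper's proof is exactly the preceding derivation in Subsection 3.3 (the form \eqref{eq:gj} forced by Assumption A, followed by the logarithmic change of coordinates \eqref{eq:change} yielding \eqref{eq:poincare}--\eqref{eq:Mj}), which you reconstruct faithfully, including the double role of $A_j$ and the commutation of the permutation with the componentwise logarithm.
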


This representation in matrix form is essential for our results. In fact, if a cycle is either non-quasi-simple or does not satisfy Assumption A but  such a representation exists, then our results still hold.
 
We note that the matrices $M_j$ depend only on the contracting, expanding and transverse eigenvalues at the corresponding node $\xi_j$.
The dimension of a transition matrix, of course, depends on the dimension of the state space. The form of its entries however does not.

In the coordinates defined in \eqref{eq:change}, the transition matrices of the maps $\pi_{j}$
and $g_{\left(j,l\right)}$ are the following products of basic transition matrices
\begin{equation}\label{eq:transitionM}
M^{\left(j\right)}=M_{j-1}\ldots M_{1}M_{m}\ldots M_{j+1}M_{j}
\end{equation}
and 
$$
M_{\left(l,j\right)}=\begin{cases}
M_{l}\ldots M_{j}, & l>j\\
M_{l}\ldots M_{1}M_{m}\ldots M_{j}, & l<j\\
M_{j}, & l=j,
\end{cases}
$$
respectively. 

In the definition of local stability indices, we assume asymptotically small $w$
and $z_{s}$ for any $s=1,\ldots,n_{t}$, which is equivalent to
asymptotically large negative $\boldsymbol{\eta}$. As observed in \cite[p. 903]{PodviginaAshwin2011}, taking into account that $F_j$ in \eqref{eq:poincare} is finite, we can ignore it\footnote{Alternatively, we may proceed as in \cite{Postlethwaite2010} and rescale the coordinates such that all the coefficients of the map $g_j$ are one, which yields $F_j=\boldsymbol{0}$.} so that the map $g_j$ asymptotically to leading order is described by the matrix $M_j$. The same holds true for their compositions. Then the limit in \eqref{eq:Basin} in the new coordinates \eqref{eq:change} becomes
\begin{equation}\label{eq:basinlog}
M_{\left(l,j\right)}\left(M^{\left(j\right)}\right)^{k}\boldsymbol{\eta}\underset{{\scriptstyle k\rightarrow\infty}}{\rightarrow}\boldsymbol{-\infty} \;\;\;\textrm{ for all } l=1,\ldots,m. 
\end{equation}
Setting $M=M^{\left(j\right)}$, the points in $\mathcal{B}^{\pi_j}_{\delta}$ are a subset of
\begin{equation}\label{eq:basinlog2}
U^{-\infty}\left(M\right)=\left\{ \boldsymbol{y}\in\mathbb{R}_{-}^{N}:\;\lim_{k\rightarrow\infty}M^{k}\boldsymbol{y}=\boldsymbol{-\infty}\right\}.
\end{equation}
Assume that $M$ has eigenvalues $\lambda_{1},\ldots,\lambda_{N}$ and corresponding linearly independent eigenvectors $\boldsymbol{w}_{1},\ldots,\boldsymbol{w}_{N}$.
Let $\lambda_{\max}$ be the maximum, in absolute value, eigenvalue
of $M$ such that $\left|\lambda_{\max}\right|\neq1$ and $\boldsymbol{w}^{\max}=\left(w_{1}^{\max},\ldots,w_{N}^{\max}\right)$ the associated eigenvector. 

Although \cite{Podvigina2012} claims that 
Lemma 3 provides only sufficient conditions in terms of $\lambda_{\max}$ and $\boldsymbol{w}^{\max}$ that guarantee $\ell\left(U^{-\infty}\left(M\right)\right)>0$ for an arbitrary $N \times N$ real matrix $M$ (regardless of whether or not $\left|\lambda_{\max}\right|\neq1$), it is easy to see that these are also necessary. For ease of reference, we state the necessary and sufficient conditions in  Lemma~\ref{lemma5}.

Note that any vector $\boldsymbol{y}=\left(y_1,\ldots,y_N\right)\in\R_{-}^{N}$ may be written as
a linear combination of the eigenvectors of $M$ as
\begin{equation}\label{eq:lincomb}
\boldsymbol{y}=\sum_{i=1}^{N}a_{i}\boldsymbol{w}_{i}=P\boldsymbol{a},
\end{equation}
where $P$ is the matrix whose columns are $\boldsymbol{w}_{i}$, $i=1,\ldots,N$, and $\boldsymbol{a}=\left(a_{1},\ldots,a_{N}\right)\in\R^{N}$ is some vector.
Thus, for each $k\in\N$, the $k^{th}$ iterate of $\boldsymbol{y}$ under $M$ is
\begin{equation}\label{iterates}
M^k\boldsymbol{y}=\sum_{i=1}^{N}\lambda_i^k a_{i}\boldsymbol{w}_{i}.
\end{equation}
\begin{lemma}[Lemma 3 in \cite{Podvigina2012}]\label{lemma5}
The measure $\ell\left( U^{-\infty}(M)\right)$ is positive if and only if the three following conditions are satisfied:
\begin{enumerate}
\item [{(i)}]$\lambda_{\max}$ is real;
\item [{(ii)}]$\lambda_{\max}>1$;
\item [{(iii)}]$w_{l}^{\max}w_{q}^{\max}>0$ for all $l,q=1,\ldots,N$.
\end{enumerate}
\end{lemma}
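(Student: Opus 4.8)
The plan is to work in the eigenbasis of $M$ using the decomposition \eqref{eq:lincomb}--\eqref{iterates}, and to analyse when the half-line $\{M^k\boldsymbol{y}\}_{k\in\N}$ escapes to $\boldsymbol{-\infty}$ as a function of the coefficients $a_i$. Write $M^k\boldsymbol{y}=\sum_i \lambda_i^k a_i \boldsymbol{w}_i$. The asymptotic behaviour of each component of this vector is governed by the term(s) with the largest $|\lambda_i|$ that appear with nonzero coefficient. The key observation is that $\boldsymbol{y}\in U^{-\infty}(M)$ forces \emph{every} component of $M^k\boldsymbol{y}$ to tend to $-\infty$; in particular it forces each component to be unbounded, which can only happen if some $|\lambda_i|>1$ contributes. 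So on the set where it matters, the leading term is $\lambda_{\max}^k a_{\max}\boldsymbol{w}^{\max}$, up to lower-order corrections from other eigenvalues of modulus $>1$ and from bounded terms.

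First I would prove \textbf{sufficiency}. Assume (i)--(iii). Consider the open set of coefficient vectors $\boldsymbol{a}$ with $a_{\max}<0$ (an open half-space, hence $\ell$-positive in $\R^N$, and $P$ is invertible so this maps to a positive-measure set of $\boldsymbol{y}$); one must also intersect with the preimage of $\R_-^N$, but since $\R_-^N$ is open and nonempty in $\boldsymbol{y}$-coordinates this intersection still has positive measure — actually it is cleaner to argue directly that a full-dimensional cone of $\boldsymbol{y}$'s lands in $U^{-\infty}(M)$, so $\ell(U^{-\infty}(M))>0$. For such $\boldsymbol{y}$, each component $(M^k\boldsymbol{y})_\ell = \lambda_{\max}^k a_{\max} w_\ell^{\max} + o(\lambda_{\max}^k)$; since $\lambda_{\max}>1$, $a_{\max}<0$, and $w_\ell^{\max}$ all have the same (nonzero) sign by (iii), the leading term has a fixed sign across all $\ell$. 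Choosing the sign of $a_{\max}$ to match (i.e. $a_{\max}w_\ell^{\max}<0$) gives $(M^k\boldsymbol{y})_\ell\to-\infty$ for every $\ell$, so $\boldsymbol{y}\in U^{-\infty}(M)$. One subtlety: if $|\lambda_{\max}|$ is attained by other eigenvalues (real repeated, or if there were a complex pair of the same modulus — excluded by (i) only for the single maximal one), one handles the genuinely repeated real case by noting the combined leading coefficient in the relevant component is still controlled; Podvigina's Lemma 3 statement sidesteps this by speaking of \emph{the} $\lambda_{\max}$, and I would follow that reading, i.e. assume the maximal-modulus eigenvalue with $|\lambda|\neq 1$ is simple or at least that its eigendirection dominates.

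Next, \textbf{necessity}: suppose one of (i)--(iii) fails; I show $\ell(U^{-\infty}(M))=0$. If (i) fails, $\lambda_{\max}$ is part of a complex-conjugate pair $\rho e^{\pm i\theta}$ with $\rho=|\lambda_{\max}|>1$ (if $\rho<1$ then no eigenvalue exceeds $1$ and $M^k\boldsymbol{y}$ stays bounded on the span of those eigenvectors — handled under (ii)); then whenever the corresponding coefficients $a,\bar a$ are nonzero, the dominant part of each real component oscillates in sign like $\rho^k\cos(k\theta+\varphi_\ell)$, so $(M^k\boldsymbol{y})_\ell$ does not converge to $-\infty$ — it takes arbitrarily large positive values along a subsequence. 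The set where \emph{all} these coefficients vanish is a proper linear subspace, hence $\ell$-null, and off it $\boldsymbol{y}\notin U^{-\infty}(M)$. If (ii) fails, i.e. $\lambda_{\max}\le 1$ (including the possibility $|\lambda_i|\le 1$ for all $i$, or $\lambda_{\max}<-1$): in the first case every component of $M^k\boldsymbol{y}$ is bounded (or bounded by $k\cdot$const if there are nontrivial Jordan blocks at modulus $1$, still not $\to-\infty$), so $U^{-\infty}(M)=\emptyset$; in the case $\lambda_{\max}<-1$, the dominant term $\lambda_{\max}^k a_{\max}w_\ell^{\max}$ alternates sign, again precluding convergence to $-\infty$ except on a null subspace. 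If (iii) fails, there exist $l,q$ with $w_l^{\max}w_q^{\max}\le 0$; if it is $<0$, then for any $\boldsymbol{y}$ with $a_{\max}\neq 0$ the leading terms of components $l$ and $q$ have opposite signs, so they cannot both $\to-\infty$, and $\{a_{\max}=0\}$ is null; if some $w_l^{\max}=0$, then component $l$ loses its $\lambda_{\max}^k$ term and is dominated by a strictly smaller modulus, so either that component is bounded (fails to reach $-\infty$) or, if the next eigenvalue also has modulus $>1$, one recurses the argument on the reduced problem — the point is that generically component $l$ cannot diverge to $-\infty$ at the required rate, and the exceptional set is again contained in a finite union of proper subspaces.

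\textbf{Main obstacle.} The delicate part is the bookkeeping in the necessity direction when several eigenvalues share the maximal modulus or when $\boldsymbol{w}^{\max}$ has zero entries, because then the "leading term" in some component comes from a strictly smaller eigenvalue and one must rule out divergence to $-\infty$ for that component as well — this is where a clean induction on $N$ (or on the number of distinct moduli $>1$) is needed, together with the observation that all the bad configurations of coefficients are confined to a measure-zero union of proper linear subspaces of $\R_-^N$ (equivalently of $\R^N$ via $P$). The sufficiency direction is comparatively routine once one picks the half-space $\{a_{\max}w_\ell^{\max}<0\}$ and checks it has positive measure after intersecting with $P^{-1}(\R_-^N)$.
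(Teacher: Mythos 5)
Your route is genuinely different from the paper's: the paper does not reprove anything here, it simply quotes Podvigina's Lemma 3, taking necessity from its parts (i)--(iv) and sufficiency from part (v), whereas you reconstruct the whole spectral argument (expansion \eqref{eq:lincomb}--\eqref{iterates}, dominance of $\lambda_{\max}^k a_{\max}\boldsymbol{w}^{\max}$, exceptional coefficient sets confined to proper subspaces). That reconstruction is essentially the right mechanism, and your treatment of the failure modes ``$\lambda_{\max}$ complex'', ``$|\lambda_{\max}|\le 1$'', ``$\lambda_{\max}<-1$'' and ``$w_l^{\max}w_q^{\max}<0$ for some pair'' is sound. The genuine gap is exactly at the point you call the main obstacle: the case where condition (iii) fails because some component $w_l^{\max}=0$. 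There, your plan (recurse on the reduced problem and show the bad set is a null union of subspaces) cannot work, because the conclusion $\ell\left(U^{-\infty}(M)\right)=0$ is simply false for a general diagonalisable $M$: take $M=\mathrm{diag}(3,2)$, so $\lambda_{\max}=3$, $\boldsymbol{w}^{\max}=(1,0)$ and (iii) fails, yet $U^{-\infty}(M)=\R_{-}^{2}$ has positive measure, since component $2$ diverges to $-\infty$ under the subdominant eigenvalue $2>1$. The only correct way to close this case is the one implicit in the paper: for the transition matrices under consideration all components of $\boldsymbol{w}^{\max}$ are generically nonzero (Theorem 3 of \cite{Podvigina2012}, used in Lemma~\ref{lem:lambda_greater_1}), so (iii) can only fail through a strict sign disagreement, which is the situation Podvigina's Lemma 3(iv) actually covers. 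Your proof needs to state this genericity restriction rather than attempt to prove the zero-component case.

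A second, smaller flaw is in sufficiency: the assertion that $\{a_{\max}<0\}$ meets $P^{-1}\!\left(\R_{-}^{N}\right)$ in positive measure ``because both are open and nonempty'' is not a valid step — two nonempty open sets can be disjoint, and for some left eigenvectors the half-space $\{a_{\max}<0\}$ misses the negative orthant entirely. Condition (iii) is what saves you, and it should be used explicitly: normalising $\boldsymbol{w}^{\max}$ to have all entries positive, the point $\boldsymbol{y}=-\boldsymbol{w}^{\max}$ lies in $\R_{-}^{N}$ and has $a_{\max}=-1<0$, so a full-dimensional cone around it lies in $\{a_{\max}<0\}\cap\R_{-}^{N}$, and your leading-term estimate then places this cone inside $U^{-\infty}(M)$. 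With that fix, and with the degenerate-multiplicity worries set aside by the paper's standing assumption of $N$ linearly independent eigenvectors and the generic simplicity of $\lambda_{\max}$, your sufficiency argument matches the content of Podvigina's Lemma 3(v).
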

\begin{proof} 
Suppose at least one of the conditions \emph{(i)-(iii)} is violated. It follows directly from Lemma 3(i)-(iv) in \cite{Podvigina2012} that $\ell\left(U^{-\infty}\left(M\right)\right)=0$.  

The converse is already shown in Lemma 3(v) of \cite{Podvigina2012}.
\end{proof}

Since the transformation \eqref{eq:change} can turn a set of finite measure into one of infinite measure, we preserve the convention of \cite[p.\ 1900]{Podvigina2012} that the measure of a set is always its measure in the original variables.

\subsection{Calculation of stability indices}

For $R \in \R$, consider the subset of $\R^N$ 
\begin{equation}\label{eq:UR}
U_{R}=\left\{ \boldsymbol{y}:\;\max_{i}\textrm{ }y_{i}<R\right\}.
\end{equation}
Using \eqref{eq:basinlog} and recalling that the change of coordinates \eqref{eq:change} transforms the origin into $\boldsymbol{-\infty}$, an alternative way of describing $\mathcal{B}_{\delta}^{\pi_{j}}$ for $S=\ln(\delta)$ is
\begin{equation}\label{eq:S-localbasin}
\begin{aligned}
\mathcal{U}_{S}^{M^{\left(j\right)}}=& \left\{ \boldsymbol{y}\in\mathbb{R}_{-}^{N}:\;M_{\left(l,j\right)}\left(M^{\left(j\right)}\right)^{k}\boldsymbol{y}\in U_{S}\textrm{ for all } l=1,\ldots,m,\;k\in\mathbb{N}_{0}\right. \\
& \textrm{ and }
\left. \lim_{k\rightarrow\infty}M_{\left(l,j\right)}\left(M^{\left(j\right)}\right)^{k}\boldsymbol{y}=\boldsymbol{-\infty} \textrm{ for all } l=1,\ldots,m \right\} .
\end{aligned}
\end{equation}
The set $\mathcal{U}_{S}^{M^{\left(j\right)}}$ is the
$S$-local basin of attraction of $\boldsymbol{-\infty}$
in $\mathbb{R}^{N}$ for the matrix $M^{\left(j\right)}$ representing the map $\pi_j$.

The calculation of the local stability indices is split into two cases: when transverse eigenvalues at all nodes are negative and when, for at least one node, at least one transverse eigenvalue is positive. This latter case occurs necessarily when the cycle is part of a heteroclinic network.
In terms of the transition matrices, this corresponds to basic transition matrices with only non-negative entries and with at least one negative entry, respectively. These two cases are addressed in Theorems~\ref{thm:non-negative_entries} and \ref{th:negative_entries} below.

The following is a useful auxiliary result for the proof of Theorem~\ref{thm:non-negative_entries}.

\begin{lemma}\label{lem:lambda_greater_1}
Let $M$ be a transition matrix \eqref{eq:transitionM} with non-negative entries and $\lambda_{\max}>1$. Then generically $U^{-\infty}(M) = \R^N_-$.
\end{lemma}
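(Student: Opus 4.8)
The inclusion $U^{-\infty}(M)\subseteq\R^N_-$ is immediate from the definition \eqref{eq:basinlog2}, so the plan is to establish the reverse inclusion, i.e.\ that $M^k\boldsymbol{y}\to\boldsymbol{-\infty}$ for \emph{every} $\boldsymbol{y}\in\R^N_-$. First I would observe that $M$ is a non-negative matrix: by \eqref{eq:transitionM} it is a cyclic product of the basic transition matrices \eqref{eq:Mj}, each of which is a permutation matrix times a matrix whose entries are $b_{j,1}=c_j/e_j>0$, $b_{j,s+1}=-t_{j,s}/e_j\ge 0$ (by the hypothesis of non-negative entries), together with zeros and ones. Hence Perron--Frobenius applies: $\rho(M)$ is an eigenvalue of $M$ with a non-negative right eigenvector, and also an eigenvalue of $M^{T}$ with a non-negative left eigenvector. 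Moreover $\lambda_{\max}>1$ forces $\rho(M)>1$ --- were $\rho(M)\le 1$, every eigenvalue of modulus $\ne 1$ would have modulus $<1$, contradicting $\lambda_{\max}>1$ --- and therefore $\lambda_{\max}=\rho(M)$ is exactly the Perron eigenvalue, with associated (up to sign) non-negative right eigenvector $\boldsymbol{w}^{\max}$ and left eigenvector $\boldsymbol{v}^{\max}$.

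The core of the argument is that, generically, $\boldsymbol{w}^{\max}$ has \emph{all} entries of the same (say positive) sign, and $\lambda_{\max}$ is a simple eigenvalue strictly dominating the rest in modulus. For the positivity I would argue from the directed graph of $M$: since $b_{j,1}=c_j/e_j>0$ always and $b_{j,s+1}=-t_{j,s}/e_j\ne 0$ generically, at each node the expanding coordinate is coupled with strictly positive weight to every coordinate of the next section, so after one full turn around the cycle every coordinate is reachable from every other with positive weight; consequently the support of the Perron eigenvector cannot be a proper coordinate subset, which forces all entries of $\boldsymbol{w}^{\max}$ to be nonzero. (When all $A_j=I$ --- type $B$ --- the matrix $M$ is only block lower-triangular, with a single entry $\prod_j c_j/e_j$ above an identity block; its $\lambda_{\max}$-eigenvector is nonetheless entrywise positive precisely because $\prod_j c_j/e_j>1$ and that off-diagonal column is positive, which again uses $t_{j,s}\ne 0$.) Genericity also yields distinct eigenvalues, so $M$ is diagonalisable and $\langle\boldsymbol{v}^{\max},\boldsymbol{w}^{\max}\rangle>0$.

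Finally I would expand $\boldsymbol{y}=\sum_{i=1}^N a_i\boldsymbol{w}_i$ as in \eqref{eq:lincomb}, so that $a_{\max}=\langle\boldsymbol{v}^{\max},\boldsymbol{y}\rangle/\langle\boldsymbol{v}^{\max},\boldsymbol{w}^{\max}\rangle$; since $\boldsymbol{v}^{\max}\ge\boldsymbol{0}$ is nonzero and $\boldsymbol{y}$ has all entries strictly negative, $a_{\max}<0$. By \eqref{iterates} and strict dominance, $M^k\boldsymbol{y}=\lambda_{\max}^k\bigl(a_{\max}\boldsymbol{w}^{\max}+\boldsymbol{r}_k\bigr)$ with $\boldsymbol{r}_k\to\boldsymbol{0}$; as $\lambda_{\max}>1$, $a_{\max}<0$ and $\boldsymbol{w}^{\max}$ is entrywise positive, every coordinate of $M^k\boldsymbol{y}$ tends to $-\infty$, so $\boldsymbol{y}\in U^{-\infty}(M)$ and the claimed equality follows. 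I expect the main obstacle to be the genericity step: $M$ need not be irreducible (it is already reducible for type $B$ cycles), so the Perron--Frobenius theorem for irreducible matrices cannot be quoted directly, and one has to check by hand that the exceptional sets --- where some $t_{j,s}=0$, where $\lambda_{\max}$ fails to be simple or strictly dominant, or where $\boldsymbol{w}^{\max}$ acquires a zero entry --- are contained in a finite union of proper algebraic subvarieties of the parameter space of eigenvalues.
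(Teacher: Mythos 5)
Your overall route is sound in outline and in fact mirrors the argument the paper relies on: the paper's proof is a two-line citation of Podvigina (Nonlinearity 2012), namely Theorem 3 there (generically all components of $\boldsymbol{w}^{\max}$ are non-zero) followed by Lemma 4(iii) there (for a non-negative matrix with $|\lambda_{\max}|>1$ and full-support $\boldsymbol{w}^{\max}$ one gets $U^{-\infty}(M)=\R^N_-$). Your Perron--Frobenius identification of $\lambda_{\max}$ with $\rho(M)$, the computation $a_{\max}=\langle\boldsymbol{v}^{\max},\boldsymbol{y}\rangle/\langle\boldsymbol{v}^{\max},\boldsymbol{w}^{\max}\rangle<0$ for $\boldsymbol{y}\in\R^N_-$, and the conclusion that all coordinates of $M^k\boldsymbol{y}$ tend to $-\infty$ are essentially a from-scratch reproof of that Lemma 4(iii).

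The genuine gap is precisely the step you flag yourself: the genericity claims. Your graph-theoretic justification that ``after one full turn around the cycle every coordinate is reachable from every other with positive weight'' is false for matrices of the form \eqref{eq:Mj}--\eqref{eq:transitionM}. Only the \emph{first} column of each $M_j$ is (generically) full; the remaining columns are permuted identity columns, so in the product $M$ the first coordinate need not receive input from any other coordinate. Your own type-$B$ example ($A_j=I$ for all $j$) exhibits this: there $M_{1k}=0$ for all $k\ge 2$, the digraph is not strongly connected, and moreover the eigenvalue $1$ has multiplicity $N-1$ for \emph{all} parameter values, so your claim that genericity yields distinct eigenvalues (hence simplicity and strict modulus dominance of $\lambda_{\max}$) also fails structurally rather than on a thin exceptional set. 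Because $M$ is reducible, the standard irreducible/primitive Perron--Frobenius theorem cannot deliver full support of $\boldsymbol{w}^{\max}$, nor uniqueness of the eigenvalue of maximal modulus (which you need for $\boldsymbol{r}_k\to\boldsymbol{0}$); these facts require an argument exploiting the specific structure of products of matrices \eqref{eq:Mj}, which is exactly the content of Podvigina's Theorem 3 and Lemma 4. As written, your proof asserts the conclusion of those results with an incorrect supporting argument and an acknowledged unverified exceptional-set analysis; either carry out that analysis for the actual zero pattern of $M$ (treating the reducible cases such as type $B$ by hand, as you began to do), or simply cite Theorem 3 and Lemma 4(iii) of Podvigina's paper, at which point the lemma follows immediately, as in the paper.
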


\begin{proof}
Since $\left|\lambda_{\max}\right|>1$, Theorem 3 in
\cite{Podvigina2012} guarantees that generically all components of the eigenvector $\boldsymbol{w}^{\max}$ are non-zero, and by Lemma 4(iii) in \cite{Podvigina2012},
$U^{-\infty}\left(M\right)=\mathbb{R}_{-}^{N}$.
\end{proof}

We point out that, in view of Lemma 4(i) in \cite{Podvigina2012}, the hypothesis that $\lambda_{\max}>1$ is equivalent to $\left|\lambda_{\max}\right|>1$. 

Denote by $\sigma_j$ the local stability index along the connection leading to $\xi_j$.

\begin{theorem}\label{thm:non-negative_entries}
Let $M_{j}$, $j=1,\ldots,m$, be basic transition matrices of a collection of maps
associated with a heteroclinic cycle.
Suppose that for all $j=1,\ldots,,m$ all entries of the matrices
are non-negative. Then:
\begin{enumerate}
\item [(a)]  If the transition matrix $M^{\left(1\right)}=M_{m}\ldots M_{1}$
satisfies $\left|\lambda_{\max}\right|>1$ then $\sigma_{j}=+\infty$
for all $j=1,\ldots,m$, and the cycle
is asymptotically stable.
\item [(b)]  Otherwise, $\sigma_{j}=-\infty$ for
all $j=1,\ldots,m$ and the cycle is not an attractor. 
\end{enumerate}
\end{theorem}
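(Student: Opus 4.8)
The plan is to reduce the stability index computation to the behaviour of the Poincaré matrices $M^{(j)}$ acting on the negative orthant, using the dictionary set up above: by \eqref{eq:basinlog2} and \eqref{eq:S-localbasin}, the $\delta$-local basin $\mathcal{B}_\delta^{\pi_j}$ corresponds, in the logarithmic coordinates \eqref{eq:change}, to the set $\mathcal{U}_S^{M^{(j)}} \subset U^{-\infty}(M^{(j)})$, and the stability index at a point of the connection into $\xi_j$ is read off from how large a fraction of a neighbourhood (equivalently, of the orthant, after passing to logarithms) lies in this set as $\varepsilon \to 0$ (i.e. as the relevant cut-off $R \to -\infty$ in \eqref{eq:UR}).

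\medskip

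First I would record that all basic transition matrices $M_j$ have non-negative entries by hypothesis, hence every product $M^{(j)}$ and every partial-turn matrix $M_{(l,j)}$ has non-negative entries; moreover each $M_j$ has a positive entry in every row (the first column entry $b_{j,1}=c_j/e_j>0$ together with the $1$'s, after the permutation $A_j$), so the $M^{(j)}$ are in fact column-allowable / have no zero row. Next I would treat case (a): assuming $|\lambda_{\max}|>1$ for $M^{(1)}$, the spectra of all the conjugate matrices $M^{(j)} = M_{(j-1,j)}\, M^{(1)}_{\text{cyclically shifted}}$ coincide (they are cyclic conjugates of the same product of the $M_j$), so $|\lambda_{\max}|>1$ for every $j$. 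By the remark after Lemma~\ref{lem:lambda_greater_1}, $|\lambda_{\max}|>1$ is equivalent to $\lambda_{\max}>1$, so Lemma~\ref{lem:lambda_greater_1} gives generically $U^{-\infty}(M^{(j)}) = \R^N_-$. One still has to check that the partial-turn conditions in \eqref{eq:S-localbasin} do not cut this down: since all $M_{(l,j)}$ have non-negative entries and $M^{(j)k}\boldsymbol{y} \to \boldsymbol{-\infty}$ along the dominant eigendirection (whose eigenvector has all components of one sign, generically strictly), $M_{(l,j)} M^{(j)k}\boldsymbol{y} \to \boldsymbol{-\infty}$ as well, so $\mathcal{U}_S^{M^{(j)}}$ is (generically, up to a measure-zero set) all of $\R^N_-$. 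In the original variables this says $\mathcal{B}_\delta^{\pi_j}$ contains a full neighbourhood of the origin intersected with the relevant orthant; combining over the finitely many orthants and invoking Lemma~\ref{index_fas}/the definition of the index, $\Sigma_{\varepsilon,\delta}(x) \to 1$, giving $\sigma_{+}=\infty$, $\sigma_{-}$ finite, hence $\sigma_j = +\infty$. Asymptotic stability then follows since the (local) basin of the cycle has full measure near it, and by Theorem~\ref{thm:lohse} (or directly, since there are no positive transverse eigenvalues the cycle is not part of a forced network) this upgrades to genuine asymptotic stability via the Krupa--Melbourne criterion.

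\medskip

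For case (b), the hypothesis is that $|\lambda_{\max}| \le 1$ for $M^{(1)}$, i.e. (since $\lambda_{\max}$ is defined as the largest-modulus eigenvalue with modulus $\ne 1$) there is no eigenvalue of modulus exceeding $1$. Then for every $\boldsymbol{y}\in\R^N_-$ the iterates $M^{(j)k}\boldsymbol{y}$ cannot tend to $\boldsymbol{-\infty}$: writing $\boldsymbol{y}$ in the eigenbasis as in \eqref{eq:lincomb}--\eqref{iterates}, every term stays bounded or grows at most polynomially-times-bounded (if there is a non-trivial Jordan block at modulus $1$) but crucially cannot diverge to $-\infty$ in all components — here I would use Lemma~\ref{lemma5}, whose conditions (i)--(iii) are exactly what fails, to conclude $\ell(U^{-\infty}(M^{(j)}))=0$, and hence $\ell(\mathcal{B}_\delta^{\pi_j})=0$ for all $\delta$. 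By the convention fixed after Lemma~\ref{lemma5} that measures are taken in the original variables, this transfers to $\ell(\mathcal{B}_\delta(\text{cycle}))=0$, so $\Sigma_{\varepsilon,\delta}(x)=0$ for small $\varepsilon$, giving $\sigma_{-}=\infty$, $\sigma_{+}$ finite (in fact $0$), hence $\sigma_j = -\infty$ for all $j$; and the cycle is not f.a.s., in particular not an attractor.

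\medskip

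The main obstacle I expect is the bookkeeping around the partial-turn maps $g_{(l,j)}$ / matrices $M_{(l,j)}$ in \eqref{eq:S-localbasin}: one must be careful that imposing $M_{(l,j)}(M^{(j)})^k\boldsymbol{y}\in U_S$ for all $l$ and $k$, plus the limit condition, does not shrink $U^{-\infty}(M^{(j)})$ to a smaller-measure set in case (a) — this is where non-negativity of all the intervening matrices, together with the one-signed dominant eigenvector (genericity!), does the work, and it is the step that needs the cleanest argument. A secondary subtlety is the "generically" in case (a): one has to note that the non-generic cases (a zero component of $\boldsymbol{w}^{\max}$, or $\lambda_{\max}$ failing to be simple) form a measure-zero, closed condition on the eigenvalue data $c_j, e_j, t_{j,s}$, exactly as in Lemma~\ref{lem:lambda_greater_1}; everything else is the routine translation between the $\Sigma_{\varepsilon,\delta}$ of Definition~\ref{def:stab_index} and the measures of the sets $\mathcal{U}_S^{M^{(j)}}$.
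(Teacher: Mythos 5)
Your proposal is correct and follows essentially the same route as the paper: transfer $|\lambda_{\max}|>1$ to all $M^{(j)}$ by similarity, use Lemma~\ref{lem:lambda_greater_1} to get $U^{-\infty}(M^{(j)})=\R^N_-$, use non-negativity (and the absence of zero rows) of the partial-turn matrices $M_{(l,j)}$ to see the conditions in \eqref{eq:S-localbasin} do not shrink the basin, and in case (b) invoke Lemma~\ref{lemma5} (Podvigina's Lemma 3) to kill the measure of $U^{-\infty}(M^{(j)})$, yielding $\sigma_j=+\infty$ and $\sigma_j=-\infty$ respectively. The only cosmetic differences are that you argue measure zero where the paper cites emptiness of $U^{-\infty}(M)$, and you spell out the no-zero-row observation that the paper leaves implicit.
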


\begin{proof}
We treat each case separately.
\begin{enumerate}
\item[(a)]Suppose that the matrix $M\equiv M^{\left(1\right)}$
satisfies $\left|\lambda_{\max}\right|>1$. Similarity of the matrices $M^{(j)}$ ensures that the same holds for all $j=1, \hdots ,m$.

The product of matrices with non-negative entries also has non-negative entries. This is the case for $M^{(j)}$ and $M_{(l,j)}$ with $j,l=1,\ldots,m$. By virtue of Lemma~\ref{lem:lambda_greater_1}, $U^{-\infty}(M^{(j)}) = \R^N_-$. Then for any $\boldsymbol{y}\in \R^N_{-}$ 
$$
M_{\left(l,j\right)}\left(M^{\left(j\right)}\right)^{k}\boldsymbol{y}\in\R^N_{-} \;\;\; \textrm{ for all }j,l=1,\ldots,m,\; k\in\N_0
$$ 
and
$$
\lim_{k\rightarrow\infty}M_{\left(l,j\right)}\left(M^{\left(j\right)}\right)^{k}\boldsymbol{y}=\boldsymbol{-\infty} \;\;\; \textrm{ for all } j,l=1,\ldots,m.
$$

Taking $U_R$ an $R$-neigbourhood of $\boldsymbol{-\infty}$ with $R<0$, it follows that $U_{R}\cap\mathcal{U}_{S}^{M^{\left(j\right)}}$ is reduced to $U_{R}$ for sufficiently large negative $R<S$. Taking the respective measures in original coordinates,
$\sigma_{j,+}=\infty$
and $\sigma_{j,-}=0$ for all $j=1,\ldots,m$, by the convention in Definition~\ref{def:stab_index}.

\item[(b)]If the matrix $M\equiv M^{\left(1\right)}$ satisfies $\left|\lambda_{\max}\right|\leq1$,
then $U^{-\infty}\left(M\right)$ is empty as proved in \cite[Lemma 3(i)]{Podvigina2012}. Hence, $\sigma_{1,+}=0$ and $\sigma_{1,-}=\infty$. Since the inequality $\left|\lambda_{\max}\right|\leq1$ is satisfied for
any $M^{\left(j\right)}$, we have $\sigma_{j,+}=0$ and $\sigma_{j,-}=\infty$
for all $j=1,\ldots,m$.

\end{enumerate}
\end{proof}

Irrespectively of the sign of the entries of the basic transition matrices $M_j$ we have the following generalization of Corollary 4.1 in \cite{PodviginaAshwin2011} to quasi-simple heteroclinic cycles.

\begin{lemma}\label{lem:l(U(M))}
Let $M_{j}$, $j=1,\ldots,m$, be  basic transition matrices of a collection of maps associated with a heteroclinic cycle.
If $\ell\left(U^{-\infty}\left(M^{\left(j\right)}\right)\right)=0$ for some $j\in\left\{ 1,\ldots,m\right\}$ then $\ell\left(\mathcal{U}_{S}^{M^{\left(j\right)}}\right)=0$ for all $j=1,\ldots,m$ and $S<0$.
\end{lemma}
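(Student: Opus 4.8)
The plan is to show that $\mathcal{U}_S^{M^{(j)}}$ sits inside a fixed linear image of $U^{-\infty}\!\left(M^{(j_0)}\right)$, where $j_0$ is the index furnished by the hypothesis. The reason to compare against $U^{-\infty}\!\left(M^{(j_0)}\right)$ and not against $U^{-\infty}\!\left(M^{(j)}\right)$ — whose measure need not be zero uniformly in $j$, since condition (iii) of Lemma~\ref{lemma5} concerns an eigenvector and is not obviously preserved under the conjugacies relating the matrices $M^{(j)}$ — is that the definition \eqref{eq:S-localbasin} of $\mathcal{U}_S^{M^{(j)}}$ already demands convergence to $\boldsymbol{-\infty}$ along \emph{every} partial turn $M_{(l,j)}(M^{(j)})^k$, and the partial turn corresponding to $l+1\equiv j_0$ modulo $m$ conjugates the return dynamics based at $\xi_j$ into that based at $\xi_{j_0}$.

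First I would record the conjugacy relations. Splitting the full loop based at $\xi_j$ at the intermediate node $\xi_{l+1}$ gives $M^{(j)}=M_{(j-1,\,l+1)}M_{(l,j)}$, and splitting the full loop based at $\xi_{l+1}$ at $\xi_j$ gives $M^{(l+1)}=M_{(l,j)}M_{(j-1,\,l+1)}$; hence $M^{(l+1)}=M_{(l,j)}\,M^{(j)}\,M_{(l,j)}^{-1}$, the matrix $M_{(l,j)}$ being invertible because each basic transition matrix in \eqref{eq:Mj} has determinant $\pm\,c_i/e_i\neq 0$ (in the degenerate case $l=j-1$ modulo $m$ the partial turn $M_{(l,j)}$ equals $M^{(j)}$ itself and the identity is trivial). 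In particular $M_{(l,j)}(M^{(j)})^k=(M^{(l+1)})^kM_{(l,j)}$ for all $k$. Now fix $j_0$ with $\ell\!\left(U^{-\infty}\!\left(M^{(j_0)}\right)\right)=0$, fix an arbitrary $j$ and $S<0$, and choose $l$ with $l+1\equiv j_0$ modulo $m$. For $\boldsymbol{y}\in\mathcal{U}_S^{M^{(j)}}$, taking $k=0$ in the membership condition of \eqref{eq:S-localbasin} yields $M_{(l,j)}\boldsymbol{y}\in U_S\subset\R^N_-$, while the limit condition together with the identity above yields $(M^{(j_0)})^k\!\left(M_{(l,j)}\boldsymbol{y}\right)=M_{(l,j)}(M^{(j)})^k\boldsymbol{y}\to\boldsymbol{-\infty}$; by \eqref{eq:basinlog2} this says $M_{(l,j)}\boldsymbol{y}\in U^{-\infty}\!\left(M^{(j_0)}\right)$. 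Therefore $M_{(l,j)}\!\left(\mathcal{U}_S^{M^{(j)}}\right)\subseteq U^{-\infty}\!\left(M^{(j_0)}\right)$.

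Finally I would transport the null-set conclusion across the change of coordinates \eqref{eq:change}. In the original variables the invertible linear map $M_{(l,j)}$ acts as a monomial map of the open positive orthant whose Jacobian determinant equals $\det M_{(l,j)}$ times a product of monomials, hence never vanishes; such a map is a diffeomorphism onto its image and carries Lebesgue-null sets to Lebesgue-null sets, and conversely. Since $U^{-\infty}\!\left(M^{(j_0)}\right)$ is null by hypothesis and, by the stated convention, measures are computed in the original variables, the inclusion of the previous paragraph forces $\ell\!\left(\mathcal{U}_S^{M^{(j)}}\right)=0$; as $j$ and $S<0$ were arbitrary, the lemma follows.

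The step I expect to be the main obstacle is the conceptual one of realising that $\mathcal{U}_S^{M^{(j)}}$ must be compared with a linear pull-back of $U^{-\infty}\!\left(M^{(j_0)}\right)$ rather than with $U^{-\infty}\!\left(M^{(j)}\right)$, followed by getting the index bookkeeping of the conjugacy relations right (notably the degenerate index $l=j-1$). The measure transfer in the last step is routine but should not be skipped, since \eqref{eq:change} is emphatically not measure-preserving — it can even turn a set of finite measure into one of infinite measure.
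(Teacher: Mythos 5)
Your proposal is correct, but it takes a genuinely different route from the paper's proof of Lemma~\ref{lem:l(U(M))}. The paper argues through the characterisation in Lemma~\ref{lemma5}: if condition \emph{(i)} or \emph{(ii)} fails, similarity of the matrices $M^{(j)}$ transfers the failure to every $j$ and the inclusion $\mathcal{U}_{S}^{M^{(j)}}\subset U^{-\infty}\left(M^{(j)}\right)$ finishes the argument; if only \emph{(iii)} fails, the paper shows that iterates of points align with the mixed-sign eigenvector $\boldsymbol{w}^{\max,j}$ and hence eventually leave $\mathbb{R}_{-}^{N}$, and then pushes this failure of the limit through the partial turns. You bypass Lemma~\ref{lemma5} and the eigenvector argument altogether: using the conjugacy $M_{(l,j)}\left(M^{(j)}\right)^{k}=\left(M^{(l+1)}\right)^{k}M_{(l,j)}$ (an identity the paper itself employs, e.g.\ in the proof of Lemma~\ref{lem:cal_U_S}), you extract from \eqref{eq:S-localbasin} the single inclusion $M_{(l,j)}\left(\mathcal{U}_{S}^{M^{(j)}}\right)\subseteq U^{-\infty}\left(M^{(j_0)}\right)$ for the partial turn ending at the bad index $j_0$, and then transport nullity back, using that $\det M_{j}=\pm c_{j}/e_{j}\neq 0$ so that in the original variables $M_{(l,j)}$ corresponds to a monomial diffeomorphism of the open positive orthant, under which preimages of null sets are null. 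What your approach buys is uniformity: no case distinction on which condition of Lemma~\ref{lemma5} fails, and no genericity caveat of the kind implicit in the paper's assertion that the iterates leave $\mathbb{R}_{-}^{N}$ for \emph{any} $\boldsymbol{y}$ (which silently ignores the hyperplane $a_{\max}=0$); the price is the extra, easily verified, measure-transport step, which is indeed necessary since \eqref{eq:change} is not measure-preserving. Your cautionary observation that $\ell\left(U^{-\infty}\left(M^{(j)}\right)\right)=0$ for one $j$ need not propagate to all $j$ — condition \emph{(iii)} is not a similarity invariant — is exactly right, and is the reason the paper treats the \emph{(iii)}-failure case separately while you compare against the fixed index $j_0$ instead.
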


\begin{proof}
By virtue of Lemma \ref{lemma5}, $\ell\left(U^{-\infty}\left(M^{\left(j\right)}\right)\right)=0$ if and only if $M^{\left(j\right)}$ violates at least one condition from \emph{(i)} to \emph{(iii)}. 

We note that all matrices $M^{\left(j\right)}$ are similar so that conditions \emph{(i)-(ii)} of Lemma \ref{lemma5} either are, or are not, simultaneously satisfied for all $j=1,\ldots,m$. If either \emph{(i)} or \emph{(ii)} fail, we trivially get 
$\ell\left(U^{-\infty}\left(M^{\left(j\right)}\right)\right)=0$ for all $j=1,\ldots,m$. Since $\mathcal{U}_{S}^{M^{\left(j\right)}}\subset U^{-\infty}\left(M^{\left(j\right)}\right)$, it follows that $\ell\left(\mathcal{U}_{S}^{M^{\left(j\right)}}\right)=0$ for all $j=1,\ldots,m$ and $S<0$.

Suppose now \emph{(i)-(ii)} hold for all $j=1,\hdots,m$ while \emph{(iii)} is not satisfied for some $j\in\left\{ 1,\ldots,m\right\}$.
That is, $\lambda_{\max}$ is real and greater than 1 and there exist $q,p\in\left\{ 1,\ldots,N\right\}$ such that $w^{\max,j}_{q}w^{\max,j}_{p}\leq0$. 
Considering expansion \eqref{eq:lincomb} for a vector $\boldsymbol{y}$, the iterates $\left(M^{(j)}\right)^k\boldsymbol{y}$ become asymptotically close to $a_{\max}\lambda_{\max}^k\boldsymbol{w}_{\max}$ as $k\rightarrow\infty$ (see \eqref{iterates}t). Then $\left(M^{(j)}\right)^k\boldsymbol{y}$ are not in $\R_{-}^N$ for sufficiently large $k$ and any $\boldsymbol{y}\in\R^N$.
In particular, this applies to the iterates of $M_{\left(j-1,l\right)}\boldsymbol{y}$ under $M^{\left(j\right)}$ such that 
$$
\lim_{k\rightarrow\infty}\left(M^{\left(j\right)}\right)^{k}M_{\left(j-1,l\right)}\boldsymbol{y}=\lim_{k\rightarrow\infty}M_{\left(j-1,l\right)}\left(M^{\left(l+1\right)}\right)^{k}\boldsymbol{y}\notin\mathbb{R}_{-}^{N}.
$$
for all $l=1,\ldots,m$. Taking \eqref{eq:S-localbasin} into account, we have $\ell\left(\mathcal{U}_{S}^{M^{\left(l+1\right)}}\right)=0$ for all $l=1,\ldots,m$ and $S<0$.
\end{proof}

More generally, concerning local stability indices for quasi-simple heteroclinic cycles, we have Theorem~\ref{th:negative_entries}. 
We assume now that some entry of at least one basic transition matrix is negative, which corresponds to a positive transverse eigenvalue at a node of the heteroclinic cycle.

We need a couple of auxiliary results and some more notation as follows. Define the set
\begin{equation}\label{U_R}
U_{R}\left(\boldsymbol{\alpha}_{1};\boldsymbol{\alpha}_{2};\ldots;\boldsymbol{\alpha}_{N}\right)=\left\{ \boldsymbol{y}\in U_{R}:\;\sum_{i=1}^{N}\alpha_{si}y_{i}<0 \textrm{ for } s=1, \hdots , N \right\}
\end{equation}
where $\boldsymbol{\alpha}_{s}=\left(\alpha_{s1},\ldots,\alpha_{sN}\right)\in\mathbb{R}^{N}$, 
$s=1,\ldots,N$ and $R<0$. 
Note that if $\alpha_{si}>0$ for all $s=1,\hdots,N$, then $\sum_{i=1}^{N}\alpha_{si}y_{i}<0$ trivially.

Consider $\boldsymbol{y}\in\mathbb{R}_{-}^{N}$ written as a linear combination of the eigenvectors of $M^{(j)}$ as in \eqref{eq:lincomb}. Then, 

\begin{equation}\label{eq:a} 
\boldsymbol{a} = \left( P^{(j)}\right)^{-1} \boldsymbol {y}.
\end{equation}
Let
$$
\boldsymbol{v}^{\max,j} = \left( v_{1}^{\max,j}, v_{2}^{\max,j}, \hdots , v_{N}^{\max,j} \right)
$$
be the line of $\left( P^{(j)}\right)^{-1}$ corresponding to the position associated with $\lambda_{\max}$. 
If $a_{\max}<0$ for all $\boldsymbol{y} \in \R^N_-$, then $U^{-\infty}\left(M^{\left(j\right)}\right)=\R^N_-$. Otherwise, using \eqref{eq:a} above and recalling that $U_0=\R^N_-$, we obtain
\begin{equation}\label{v_max}
U^{-\infty}\left(M^{(j)}\right)=\left\{ \boldsymbol{y}\in U_0:\;\sum_{i=1}^N v_{i}^{\max,j}y_i<0\right\} = U_0\left(\boldsymbol{v}^{\max,j};\boldsymbol{0};\ldots;\boldsymbol{0}\right).
\end{equation}

\begin{lemma}\label{lem:ell_positive}
Let $q=j_1,...,j_L$, $L\geq1$, denote all the indices for which $M_q$ has at least one negative entry.
Then, $\ell\left(U^{-\infty}\left(M^{\left(j\right)}\right)\right)>0$
for all $j=1,\ldots,m$, if and only if $\ell\left(U^{-\infty}\left(M^{\left(j\right)}\right)\right)>0$
for all $j=j_p+1$, $p=1,\ldots,L$, such that $j_p+1\notin \left\{ j_1,\ldots,j_L \right\}$.
\end{lemma}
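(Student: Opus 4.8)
The plan is to prove this equivalence by reducing everything to the behaviour of the matrix $M^{(j)}$ at a single "representative" index after each block of negative-entry nodes. The key observation is that similarity of the matrices $M^{(j)}$ (since they are cyclic products $M_{j-1}\ldots M_1 M_m\ldots M_j$ of the same factors) makes conditions (i) and (ii) of Lemma~\ref{lemma5} simultaneously true or false for all $j$; so the only index-dependent obstruction to $\ell(U^{-\infty}(M^{(j)}))>0$ is condition (iii), the sign-alignment of the components of $\boldsymbol{w}^{\max,j}$. Thus the statement is really about which $j$ can witness a failure of (iii).

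First I would dispose of the "only if" direction, which is trivial: the index set $\{j_p+1 : p=1,\ldots,L,\ j_p+1\notin\{j_1,\ldots,j_L\}\}$ is a subset of $\{1,\ldots,m\}$, so if $\ell(U^{-\infty}(M^{(j)}))>0$ for all $j$ it holds in particular for those indices. For the "if" direction, I would argue as follows. Suppose $\ell(U^{-\infty}(M^{(j)}))>0$ at every index of the form $j_p+1$ that does not itself start with a negative-entry factor. I want to propagate positivity of the measure to all remaining indices. The idea is a "transport" argument along the cycle: if $j$ and $j+1$ are such that $M_j$ has only non-negative entries, then $M^{(j+1)} = M_j M^{(j)} M_j^{-1}$ — no wait, more carefully, $M^{(j+1)}$ and $M^{(j)}$ are conjugate via a product of the basic matrices, and one needs to track how a non-negative-entry factor acts on the sign structure of $\boldsymbol{w}^{\max}$. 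Concretely, if $M_j \geq 0$ entrywise and $\boldsymbol{w}^{\max,j}$ has all components of one sign (so (iii) holds for $M^{(j)}$), then $\boldsymbol{w}^{\max,j+1}$ is proportional to $M_j \boldsymbol{w}^{\max,j}$ (since eigenvectors transport by conjugation), and $M_j$ applied to a sign-definite vector — given that $M_j$ has the special lower-triangular-up-to-permutation form \eqref{eq:Mj} with positive diagonal-type entries — stays sign-definite generically. Hence (iii) is inherited across non-negative factors. Running this from each $j_p+1$ forward until the next negative-entry index $j_{p+1}$ covers every index in $\{1,\ldots,m\}$, because the negative-entry indices $j_1,\ldots,j_L$ partition the cycle into arcs on each of which only the first index can start a new "regime."

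The main obstacle, I expect, is making the transport step rigorous: I need to show that multiplying a sign-definite vector by a basic transition matrix $M_j$ with non-negative entries again yields a sign-definite vector, and that the relevant eigenvalue stays $\lambda_{\max}$ with a sign-definite eigenvector — i.e. that conjugation by $M_j$ really does send $\boldsymbol{w}^{\max,j}$ to (a scalar multiple of) $\boldsymbol{w}^{\max,j+1}$. This uses the specific structure in \eqref{eq:Mj} (one "pivot" column $b_{j,\cdot}$, identity elsewhere, composed with a permutation) together with the genericity clause — exactly as in Lemma~\ref{lem:lambda_greater_1}, where $|\lambda_{\max}|>1$ forces all components of $\boldsymbol{w}^{\max}$ to be non-zero. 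The remaining subtlety is bookkeeping: confirming that the arcs between consecutive negative-entry indices, read cyclically, genuinely exhaust $\{1,\ldots,m\}$ and that the wrap-around (the arc from $j_L+1$ through $m$ and back to $j_1$) is handled by the same argument, which it is because $M^{(j)}$ is defined cyclically. Once the transport lemma is in hand the rest is routine combinatorics on the index set, and the converse is immediate, so the whole proof is short.
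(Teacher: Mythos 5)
Your proposal follows essentially the same route as the paper's proof: the converse is the trivial subset observation, similarity of the $M^{(j)}$ disposes of conditions (i)--(ii) of Lemma~\ref{lemma5}, and condition (iii) is propagated by transporting $\boldsymbol{w}^{\max,j_p+1}$ through the intervening non-negative factors via the conjugation relation (the paper uses the cumulative product $M_{(j-1,j_p+1)}$, you use one factor $M_j$ at a time, which is the same argument), with genericity ensuring the transported eigenvector has all components non-zero and of one sign. The transport step you flag as the "main obstacle" is exactly the step the paper carries out, so no further comment is needed.
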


\begin{proof}
The implication $\Rightarrow$ is trivial. 

For the implication $\Leftarrow$, notice that $\ell\left(U^{-\infty}\left(M^{\left(j\right)}\right)\right)>0$ if and only if $M^{\left(j\right)}$ satisfies conditions \emph{(i)-(iii)} of Lemma \ref{lemma5}. Since all matrices $M^{\left(j\right)}$ are similar, \emph{(i)-(ii)} hold simultaneously for all $j=1,\ldots,m$. 

Suppose for each $p=1,\ldots,L$ such that $j_p+1\notin \left\{ j_1,\ldots,j_L \right\}$ the matrix $M^{\left(j_p+1\right)}$ satisfies condition \emph{(iii)} of Lemma \ref{lemma5}. Without loss of generality, we assume that all entries of matrices $M_j$ with $j=j_p+1,j_p+2,\ldots,j_{p+1}-1$ are non-negative. As a consequence, the product $M_{\left(j-1,j_p+1\right)}=M_{j-1}\ldots M_{j_p+2}M_{j_p+1}$ has non-negative entries for any $j=j_p+2,\ldots,j_{p+1}$. On the other hand, 
$M_{\left(j-1,j_p+1\right)}M^{\left(j_p+1\right)}=M^{\left(j\right)}M_{\left(j-1,j_p+1\right)}$ and\footnote{\label{foot}If $\boldsymbol{w}$ is an eigenvector of $M^{(j)}$, that is, $M^{(j)}\boldsymbol{w} = \lambda \boldsymbol{w}$ for some scalar $\lambda$, then $M^{(l)}M_{(l-1,j)}\boldsymbol{w} = M_{(l-1,j)}M^{(j)}\boldsymbol{w}= \lambda M_{(l-1,j)}\boldsymbol{w}$. It means that, $M_{(l-1,j)}\boldsymbol{w}$ is an eigenvector for $M^{(l)}$ associated to the same eigenvalue $\lambda$.}
$$\boldsymbol{w}^{\max,j}=M_{\left(j-1,j_p+1\right)}\boldsymbol{w}^{\max,j_p+1}.$$
Therefore, if all components of $\boldsymbol{w}^{\max,j_p+1}$ have the same sign, then similarly 
all components of $\boldsymbol{w}^{\max,j}$ have the same sign for all remaining $j\in\cup_{p=1}^{L}\left\{j_p+2,\ldots,j_{p+1}\right\}$. 

\end{proof}

For each $l=1,\ldots,m$, denote by $\boldsymbol{\alpha}_s^l=\left(\alpha_{s1}^l, \hdots , \alpha_{sN}^l\right)$, $s=1,\ldots,N$, the $s^{th}$ row of the matrix $M_{(l,j)}$. Note that this guarantees that $\boldsymbol{\alpha}_s^l \neq \boldsymbol{0}$ for all $s=1,\ldots,N$.
\begin{lemma}\label{lem:cal_U_S}
Let $q=j_{1},...,j_{L}$, $L\geq1$, denote all the indices for which $M_q$ has at least one negative entry.
Suppose the matrices $M^{\left(j\right)}$ satisfy conditions (i)-(iii) of Lemma \ref{lemma5} for all $j=j_p+1$, $p=1,\ldots,L$, such that $j_p+1\notin \left\{ j_1,\ldots,j_L \right\}$. Then, $\boldsymbol{y}\in\mathbb{R}_{-}^{N}$ such that for all $l=1,\ldots,m$
\begin{equation}\label{limit}
\lim_{k\rightarrow\infty}M_{\left(l,j\right)}\left(M^{\left(j\right)}\right)^{k}\boldsymbol{y}=\boldsymbol{-\infty}
\end{equation}
is equivalent to 
$$
\boldsymbol{y}\in U^{-\infty}\left(M^{\left(j\right)}\right)\cap\left(\bigcap_{p=1}^{L}U_{0}\left(\boldsymbol{\alpha}_{1}^{j_{p}};\boldsymbol{\alpha}_{2}^{j_{p}};\ldots;\boldsymbol{\alpha}_{N}^{j_{p}}\right)\right).
$$
\end{lemma}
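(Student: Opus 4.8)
The goal is to show that, under the stated hypotheses on the matrices $M^{(j)}$, the limit condition \eqref{limit} for all $l=1,\ldots,m$ is equivalent to a simple set-membership condition. The plan is to analyze the asymptotics of $M_{(l,j)}(M^{(j)})^k\boldsymbol{y}$ componentwise, using the eigenvector expansion \eqref{eq:lincomb}–\eqref{iterates} together with the fact that, by Lemma~\ref{lem:ell_positive} and the hypotheses, $\lambda_{\max}$ is real and greater than $1$ and the associated eigenvector $\boldsymbol{w}^{\max,j}$ of $M^{(j)}$ has all components of the same sign. The key observation is that $(M^{(j)})^k\boldsymbol{y}$ is asymptotically dominated by the term $a_{\max}\lambda_{\max}^k\boldsymbol{w}^{\max,j}$, so that the divergence $(M^{(j)})^k\boldsymbol{y}\to\boldsymbol{-\infty}$ happens precisely when $a_{\max}<0$ (or $a_{\max}=0$ together with a subdominant condition; one shows the $a_{\max}=0$ set is lower-dimensional and can be handled via $U^{-\infty}(M^{(j)})$ directly). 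Using \eqref{v_max}, the condition $a_{\max}<0$ together with $\boldsymbol{y}\in\R^N_-$ is exactly $\boldsymbol{y}\in U^{-\infty}(M^{(j)})$.

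First I would reduce \eqref{limit} for all $l$ to a finite set of conditions. For indices $l$ such that $M_{(l,j)}$ has only non-negative entries, if $(M^{(j)})^k\boldsymbol{y}\to\boldsymbol{-\infty}$ then applying the non-negative matrix $M_{(l,j)}$ preserves divergence to $\boldsymbol{-\infty}$ (using that no row of $M_{(l,j)}$ is identically zero, which is noted just before the lemma), so these $l$ impose no extra condition beyond $\boldsymbol{y}\in U^{-\infty}(M^{(j)})$. The genuine constraints come from the $l$ for which $M_{(l,j)}$ has a negative entry; the relabelling in the statement, together with the semiconjugacy $M_{(l-1,j)}M^{(j)} = M^{(l)}M_{(l-1,j)}$ from Footnote~\ref{foot}, lets one push all such partial-turn matrices so that the relevant obstruction is carried by the blocks $M_{j_p}$ with $j_p\in\{j_1,\ldots,j_L\}$. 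For such a block one writes $M_{(l,j)}(M^{(j)})^k\boldsymbol{y}$ as $M_{(l,j)}$ applied to a vector converging in direction to $\boldsymbol{w}^{\max,j}$; scaling out $\lambda_{\max}^k>0$, divergence to $\boldsymbol{-\infty}$ of the $s$-th component is governed by the sign of the leading coefficient, namely the sign of $\langle\boldsymbol{\alpha}_s^{j_p},\boldsymbol{y}\rangle$ when $a_{\max}$ is absorbed — more precisely one arrives at the requirement $\sum_i \alpha_{si}^{j_p} y_i < 0$ for every $s$ and every $p$, which is exactly membership in $\bigcap_{p=1}^L U_0(\boldsymbol{\alpha}_1^{j_p};\ldots;\boldsymbol{\alpha}_N^{j_p})$.

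Then I would assemble the two directions. For $\Rightarrow$: if \eqref{limit} holds for all $l$, taking $l$ with $M_{(l,j)}$ non-negative (in particular a partial turn reaching back to a node where we only need $\boldsymbol{y}\in\R^N_-$ to conclude $(M^{(j)})^k\boldsymbol{y}\to\boldsymbol{-\infty}$) gives $\boldsymbol{y}\in U^{-\infty}(M^{(j)})$, and taking the $l$ associated with each $j_p$ forces $\boldsymbol{y}\in U_0(\boldsymbol{\alpha}_1^{j_p};\ldots;\boldsymbol{\alpha}_N^{j_p})$ by the sign analysis. For $\Leftarrow$: if $\boldsymbol{y}$ lies in the stated intersection then $(M^{(j)})^k\boldsymbol{y}\to\boldsymbol{-\infty}$ (since $\boldsymbol{y}\in U^{-\infty}(M^{(j)})$), and for each $l$ the vector $M_{(l,j)}(M^{(j)})^k\boldsymbol{y}$ has every component tending to $-\infty$: if the relevant partial turn contains no negative block the non-negativity of $M_{(l,j)}$ does the job, and if it does, the leading-order coefficient of each component is a negative multiple (coming from the sign of $a_{\max}$ and the common sign of the entries of $\boldsymbol{w}^{\max}$) of the quantities $\sum_i\alpha_{si}^{j_p}y_i<0$, hence negative, so the component diverges to $-\infty$.

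\textbf{Main obstacle.} The delicate point is the bookkeeping of \emph{which} partial turns $M_{(l,j)}$ actually contribute a non-redundant linear inequality, and reconciling the indexing: the intersection in the conclusion runs only over the ``bad'' blocks $j_p$ (with $\boldsymbol{\alpha}_s^{j_p}$ the rows of $M_{(j_p,j)}$ — note the superscript convention fixed just before the lemma), not over all $l$, so one must argue that every other $l$ either gives a redundant inequality or is already subsumed by $U^{-\infty}(M^{(j)})$ via the non-negativity propagated by Lemma~\ref{lem:ell_positive}. A second subtlety is the measure-zero exceptional set where $a_{\max}=0$: on that set the dominant-eigenvalue argument is inconclusive and one must fall back on the precise definition of $U^{-\infty}(M^{(j)})$ (and, strictly, on genericity as invoked in Lemma~\ref{lem:lambda_greater_1}), so the equivalence should be read modulo the usual generic/measure-theoretic caveats already in force throughout this section. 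Everything else is a routine consequence of \eqref{iterates}, \eqref{v_max}, and the common-sign property of $\boldsymbol{w}^{\max}$.
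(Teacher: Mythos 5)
Your scaffolding matches the paper's: the semiconjugacy $M_{\left(l,j\right)}\left(M^{\left(j\right)}\right)^{k}=\left(M^{\left(l+1\right)}\right)^{k}M_{\left(l,j\right)}$, the transport of eigenvectors $P^{(l+1)}=M_{(l,j)}P^{(j)}$ from footnote~\ref{foot}, the redundancy of partial turns built from non-negative blocks, and the reduction to constraints indexed by the blocks $M_{j_p}$. But the step that actually produces the inequalities $\sum_{i}\alpha_{si}^{j_{p}}y_{i}<0$ is wrong. The dominant term of the $s$-th component of $M_{\left(l,j\right)}\left(M^{\left(j\right)}\right)^{k}\boldsymbol{y}$ is $a_{\max}\lambda_{\max}^{k}\left(M_{(l,j)}\boldsymbol{w}^{\max,j}\right)_{s}=a_{\max}\lambda_{\max}^{k}w_{s}^{\max,l+1}$; it involves $\boldsymbol{w}^{\max}$, not $\boldsymbol{y}$, and there is no way to ``absorb'' $a_{\max}$ and obtain $\langle\boldsymbol{\alpha}_{s}^{j_{p}},\boldsymbol{y}\rangle$ as the leading coefficient. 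In particular, once $a_{\max}<0$ and the components of $\boldsymbol{w}^{\max,l+1}$ share a sign, every component of $M_{\left(l,j\right)}\left(M^{\left(j\right)}\right)^{k}\boldsymbol{y}$ tends to $-\infty$ \emph{regardless} of whether the inequalities hold; so your forward implication (limit $\Rightarrow$ inequalities), argued by dominance of $\lambda_{\max}$, cannot work: a point with $a_{\max}<0$ but with, say, a positive first component of $M_{(j_1,j)}\boldsymbol{y}$ defeats the purported sign identification. Indeed, if the inequalities followed from the asymptotics they would be consequences of $\boldsymbol{y}\in U^{-\infty}\left(M^{(j)}\right)$ and hence redundant, contradicting their non-trivial role in Theorem~\ref{th:negative_entries} and the finite indices of Lemma~\ref{stab_RSP}.

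The missing idea is that the inequalities are a finite-time (domain) condition, not an asymptotic one. The paper rewrites the condition for direction $l$ as $M_{(l,j)}\boldsymbol{y}\in U^{-\infty}\left(M^{(l+1)}\right)$, and $U^{-\infty}$ is by definition \eqref{eq:basinlog2} a subset of $\mathbb{R}_{-}^{N}$ --- this is where the requirement that the partial-turn image itself lies in the negative orthant (i.e.\ stays in the admissible neighbourhood, cf.\ the $U_S$ condition in \eqref{eq:S-localbasin}) enters, at $k=0$ rather than in the limit. Then, since $M_{(l,j)}P^{(j)}=P^{(l+1)}$, the coefficient $a_{\max}$ of $M_{(l,j)}\boldsymbol{y}$ with respect to $M^{(l+1)}$ equals that of $\boldsymbol{y}$ with respect to $M^{(j)}$, so, given $\boldsymbol{y}\in U^{-\infty}\left(M^{(j)}\right)$, the only additional requirement is $M_{(l,j)}\boldsymbol{y}\in U_{0}$, which is exactly $\sum_{i}\alpha_{si}^{l}y_{i}<0$ for all $s$; the non-negative blocks then propagate this from $l=j_{p}$ to the intermediate $l$, which settles the bookkeeping you flagged. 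The $a_{\max}=0$ caveat you raise is only a measure-zero/genericity issue and is not the problem; the sign analysis is.
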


\begin{proof}
By chosing  $l=j-1$, we have $M_{\left(j-1,j\right)}\left(M^{(j)}\right)^k=\left(M^{(j)}\right)^{k+1}$ and the points $\boldsymbol{y}\in\mathbb{R}_{-}^{N}$ that satisfy
$\lim_{k\rightarrow\infty}\left(M^{(j)}\right)^{k+1}\boldsymbol{y}=\boldsymbol{-\infty}$ define the set
$U^{-\infty}\left(M^{(j)}\right)$ so that $\boldsymbol{y} \in U^{-\infty}\left(M^{(j)}\right)$.

On the other hand, we can write $M_{(l,j)}\left(M^{(j)}\right)^k=\left(M^{(l+1)}\right)^kM_{(l,j)}$ for any $l=1,\ldots,m$, $k\in\mathbb{N}_0$, and 
$$
\lim_{k\rightarrow\infty}M_{(l,j)}\left(M^{(j)}\right)^{k}\boldsymbol{y}=\lim_{k\rightarrow\infty}\left(M^{(l+1)}\right)^{k}\left[M_{(l,j)}\boldsymbol{y}\right].
$$
From \eqref{eq:basinlog2}, we have $\lim_{k\rightarrow\infty}\left(M^{(l+1)}\right)^{k}\left[M_{(l,j)}\boldsymbol{y}\right]=\boldsymbol{-\infty}$ if and only if 
$$
M_{(l,j)}\boldsymbol{y} \in U^{-\infty}\left(M^{(l+1)}\right).
$$
In order to describe the set of points for which this holds, we recall that conditions \emph{(i)-(iii)} of Lemma \ref{lemma5} ensure $\ell\left(U^{-\infty}\left(M\right)\right)>0$ where
\begin{equation}\label{Uinfty}
U^{-\infty}\left(M\right) = \left\{ \boldsymbol{y} \in \R^N_-: \; a_{\max}<0\right\}
\end{equation}
and $a_{\max}$ is the coefficient in front of $\boldsymbol{w}^{\max}$ in expansion \eqref{eq:lincomb} of $\boldsymbol{y}$ with respect to a matrix $M$. 
In particular, by virtue of Lemma \ref{lem:ell_positive},
$\ell\left(U^{-\infty}\left(M^{\left(j\right)}\right)\right)>0$ for all $j=1,\ldots,m$.

Considering $\boldsymbol {y} = P^{(j)}\boldsymbol{a}$, we have
$M_{(l,j)}\boldsymbol{y}=M_{(l,j)}P^{(j)}\boldsymbol{a}=P^{(l+1)}\boldsymbol{a}$ (see footnote \ref{foot}).
Then $a_{\max}$ is the same in the description of $\boldsymbol{y} \in U^{-\infty}\left(M^{(j)}\right)$ and of $M_{(l,j)}\boldsymbol{y} \in U^{-\infty}\left(M^{(l+1)}\right)$. Given \eqref{Uinfty}, if $\boldsymbol{y} \in U^{-\infty}\left(M^{(j)}\right)$, it suffices to demand that $M_{(l,j)}\boldsymbol{y} \in \R^N_-\equiv U_0$ so as to guarantee that $M_{(l,j)}\boldsymbol{y}$ belongs to $ U^{-\infty}\left(M^{(l+1)}\right)$.
It follows that
$$
M_{(l,j)}\boldsymbol{y} \in U_0 \Leftrightarrow \sum_{i=1}^N \alpha_{si}^l y_i < 0 \;\;\; \textrm{ for all }s=1,\hdots ,N,
$$
which is equivalent to $\boldsymbol{y} \in U_0\left(\boldsymbol{\alpha}_{1}^l, \hdots , \boldsymbol{\alpha}_{N}^l\right)$ according to \eqref{U_R}. 

We do not need to consider all the $m$ sets $U_0\left(\boldsymbol{\alpha}_{1}^l, \hdots , \boldsymbol{\alpha}_{N}^l\right)$. In fact, when $M_{\left(q,j\right)}$ is the product of matrices with non-negative entries, we have that
$\lim_{k\rightarrow\infty}M_{\left(q,j\right)}\left(M^{\left(j\right)}\right)^{k}\boldsymbol{y}=\boldsymbol{-\infty}$
holds for any $\boldsymbol{y} \in U^{-\infty}\left(M^{(j)}\right)$.
Suppose the basic transition matrix $M_{q+1}$, such that $M_{q+1}M_{\left(q,j\right)}=M_{\left(q+1,j\right)}$, has at least one negative entry. Then, as above,
$\lim_{k\rightarrow\infty}M_{\left(q+1,j\right)}\left(M^{\left(j\right)}\right)^{k}\boldsymbol{y}=\boldsymbol{-\infty}$
for any $\boldsymbol{y} \in U^{-\infty}\left(M^{(j)}\right) \cap U_0\left(\boldsymbol{\alpha}_{1}^{q+1}, \hdots , \boldsymbol{\alpha}_{N}^{q+1}\right)$. Thus, the set of points that satisfy \eqref{limit} for all $l=1,\ldots,m$ gets restricted to
$$
U^{-\infty}\left(M^{\left(j\right)}\right)\cap\left(\bigcap_{p=1}^{L}U_{0}\left(\boldsymbol{\alpha}_{1}^{j_{p}};\boldsymbol{\alpha}_{2}^{j_{p}};\ldots;\boldsymbol{\alpha}_{N}^{j_{p}}\right)\right).
$$
\end{proof}

The calculations required to obtain the local stability index are performed by means of the function $F^{\textrm{index}}$, the analogue of the function $f^{\textrm{index}}$ of \cite[p. 905]{PodviginaAshwin2011}.

\begin{definition}\label{def:F_index}
In coordinates \eqref{eq:change}, let the intersection of the local basin of attraction of a compact invariant set $X$ with a cross-section transverse to the flow at $x \in X$ be given by $U_R\left(\boldsymbol{\alpha};\boldsymbol{0};\ldots;\boldsymbol{0}\right)$ for some $\boldsymbol{\alpha}=\left(\alpha_1,\alpha_2,\ldots,\alpha_N\right) \in \R^N$ and $R<0$. We define the function $\boldsymbol{\alpha} \mapsto F^{\textrm{index}}\left(\boldsymbol{\alpha}\right)$ to be the local stability index for $X$ at $x$ relative to this intersection, i.e. $F^{\textrm{index}}\left(\boldsymbol{\alpha}\right)=\sigma_{\textrm{loc}}(x)$.
\end{definition}

Lemma~\ref{lem:cal_U_S} and Theorem~\ref{th:negative_entries} show that the set $\mathcal{B}_{\delta}^{\pi_j}$ in $H^{in}_{j}$, in coordinates \eqref{eq:change}, is of the form 
$U_{R}\left(\boldsymbol{\alpha}_{1};\boldsymbol{\alpha}_{2};\ldots;\boldsymbol{\alpha}_{N}\right)$. By virtue of \eqref{U_R}, we can write 
$$
U_R\left(\boldsymbol{\alpha}_1;\boldsymbol{\alpha}_2;\ldots;\boldsymbol{\alpha}_N\right)=\bigcap_{i=1}^{N}U_R\left(\boldsymbol{\alpha}_i;\boldsymbol{0};\ldots;\boldsymbol{0}\right).
$$
Thus, the stability index for the connection that intersects $H^{in}_j$, characterizing the local geometry of $\B_{\delta}^{\pi_j}$, is given by
\[
\min_{i=1,\ldots,N}\left\{ F^{\textrm{index}}\left(\boldsymbol{\alpha}_{i}\right)\right\} .
\]

The function $F^{\textrm{index}}$ is used to determine the local stability index at a point of a heteroclinic cycle through two components, $F^-$ and $F^+$, related to $\sigma_{\textnormal{loc},-}$ and $\sigma_{\textnormal{loc},+}$ respectively, in Definition \ref{def:stab_index}.

\begin{lemma}\label{lem:Findex}
Let 
$\boldsymbol{\alpha}=\left(\alpha_{1},\ldots,\alpha_{N}\right)
\in\mathbb{R}^{N}$
and $\alpha_{\min}=\min_{i=1,\ldots,N}\alpha_{i}$. The values of
the function $F^{\textrm{index}}:\mathbb{R}^N\rightarrow\mathbb{R}$
are
\[
F^{\textrm{index}}\left(\boldsymbol{\alpha}\right)=F^{+}\left(\boldsymbol{\alpha}\right)-F^{-}\left(\boldsymbol{\alpha}\right)
\]
where $F^{-}\left(\boldsymbol{\alpha}\right)=F^{+}\left(-\boldsymbol{\alpha}\right)$ and
\[
F^{+}\left(\boldsymbol{\alpha}\right)=
\begin{cases}
+\infty, & \textrm{if }\alpha_{\min}\geq0 \\
0, & \textrm{if }\sum_{i=1}^{N}\alpha_{i}\leq0 \\
-\frac{1}{\alpha_{\min}}\sum_{i=1}^{N}\alpha_{i} ,& \textrm{if }\alpha_{\min}<0\textrm{ and }\sum_{i=1}^{N}\alpha_{i}\geq0.
\end{cases}
\]
\end{lemma}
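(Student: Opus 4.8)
The plan is to compute $F^{\textrm{index}}(\boldsymbol{\alpha}) = F^+(\boldsymbol{\alpha}) - F^-(\boldsymbol{\alpha})$ directly from Definition~\ref{def:stab_index} applied to the set $U_R(\boldsymbol{\alpha};\boldsymbol{0};\ldots;\boldsymbol{0})$, which by Definition~\ref{def:F_index} represents the local basin of attraction in the coordinates \eqref{eq:change}. First I would translate everything back into the original coordinates $(w,\boldsymbol z)$: the half-space $\{\boldsymbol y \in U_R : \sum_i \alpha_i y_i < 0\}$ becomes, after exponentiation, the region $\{\prod_i x_i^{\alpha_i} < 1\}$ intersected with a small box $\{0 < x_i < \delta\}$ (writing $x_1 = w$, $x_{s+1} = z_s$), and I must estimate the Lebesgue measure of the intersection of this region with an $\varepsilon$-ball around the point on the connection (which sits at the origin of the cross-section, all $x_i \to 0$). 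The computation of $\Sigma_{\varepsilon,\delta}$ and then the $\varepsilon\to 0$, $\delta \to 0$ limits defining $\sigma_{\textrm{loc},\pm}$ is the analogue of the calculation in \cite[p.~905]{PodviginaAshwin2011} for $f^{\textrm{index}}$, and I would follow that template closely.

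The three cases split naturally according to the geometry of the constraint $\sum_i \alpha_i y_i < 0$ on the negative orthant $\R^N_-$. If $\alpha_{\min} \geq 0$, then every coordinate of $\boldsymbol\alpha$ is non-negative, so for $\boldsymbol y \in \R^N_-$ the sum $\sum_i \alpha_i y_i \leq 0$ automatically (with equality only on a measure-zero set), hence $U_R(\boldsymbol\alpha;\boldsymbol 0;\ldots;\boldsymbol 0)$ is all of $U_R$ up to measure zero; the full neighbourhood lies in the basin, so $\Sigma_{\varepsilon,\delta} \to 1$, giving $\sigma_{\textrm{loc},+} = \infty$ and $\sigma_{\textrm{loc},-} = 0$ by the convention in Definition~\ref{def:stab_index}, i.e. $F^+ = +\infty$. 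In the remaining cases $\alpha_{\min} < 0$. Here the relevant observation is that the half-space $\{\sum_i \alpha_i y_i < 0\}$ intersected with a large-radius sphere in $\R^N_-$ occupies a fraction of the sphere that decays like $\varepsilon^{-\sum_i \alpha_i / \alpha_{\min}}$ when $\sum_i \alpha_i > 0$ (the half-space is a ``thin wedge'' near the axis direction(s) where $\alpha_i = \alpha_{\min}$), and occupies a fraction bounded below by a positive constant (in fact $\Sigma_{\varepsilon,\delta}$ stays away from $0$) when $\sum_i \alpha_i \leq 0$. This is exactly where the exponent $-\frac{1}{\alpha_{\min}}\sum_i \alpha_i$ arises: it is the scaling exponent governing how the measure of $B_\varepsilon \cap \{\prod x_i^{\alpha_i} < 1\}$ behaves, computed via the change of variables to $\boldsymbol\eta$ and a homogeneity/cone argument on the cone $\{\sum_i \alpha_i \eta_i < 0\} \cap \R^N_-$.

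So the key steps, in order, are: (1) rewrite $U_R(\boldsymbol\alpha;\boldsymbol 0;\ldots;\boldsymbol 0)$ in original coordinates and set up $\Sigma_{\varepsilon,\delta}(x)$ as a ratio of measures of intersections with $B_\varepsilon(x)$; (2) dispose of the case $\alpha_{\min}\geq 0$ by observing the basin is a full neighbourhood, yielding $F^+ = +\infty$; (3) in the case $\alpha_{\min} < 0$, perform the change of variables \eqref{eq:change} and reduce the measure estimate to a scaling computation on a polyhedral cone in $\R^N_-$, extracting the exponent $-\frac{1}{\alpha_{\min}}\sum_i\alpha_i$ for $1 - \Sigma_{\varepsilon}$; (4) check the sub-cases: when $\sum_i\alpha_i \geq 0$ this exponent is non-negative and equals $F^+$, while when $\sum_i \alpha_i \leq 0$ the complementary set is comparatively large so $\sigma_{\textrm{loc},+} = 0$; (5) observe that replacing $\boldsymbol\alpha$ by $-\boldsymbol\alpha$ swaps the basin with (essentially) its complement within $\R^N_-$, which is precisely the symmetry interchanging the roles of $\Sigma_\varepsilon$ and $1-\Sigma_\varepsilon$, hence $F^- (\boldsymbol\alpha) = F^+(-\boldsymbol\alpha)$; (6) assemble $F^{\textrm{index}} = F^+ - F^-$. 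The main obstacle I anticipate is step (3): making the measure asymptotics genuinely rigorous, i.e. showing that the leading-order behaviour of $\ell(B_\varepsilon(x)\cap \mathcal B_\delta)$ is controlled by the linearised picture (the cone in $\boldsymbol\eta$-coordinates) with the $\delta \to 0$ limit taken afterwards so that the finite translation $F_j$ and the box truncation $\{x_i < \delta\}$ do not affect the exponent — this is the technical heart that \cite{PodviginaAshwin2011} handled for $f^{\textrm{index}}$ and that must be adapted to the $N$-dimensional polyhedral setting here. The bulk of this calculation is deferred to the appendix, as the authors indicate, so in the body I would state the formula and reference the appendix for the detailed measure estimates.
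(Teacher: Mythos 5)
Your overall strategy is the same as the paper's: pass to original coordinates, where $U_R(\boldsymbol{\alpha};\boldsymbol{0};\ldots;\boldsymbol{0})$ becomes $\{\boldsymbol{x}: \max_i|x_i|<\mathrm{e}^R,\ |x_1^{\alpha_1}\cdots x_N^{\alpha_N}|<1\}$, compare its measure with that of the ball, split into the cases $\alpha_{\min}\geq 0$, $\alpha_{\min}<0$ with $\sum_i\alpha_i\geq 0$, and $\sum_i\alpha_i<0$, and obtain $F^-$ from $F^+$ via $\boldsymbol{\alpha}\mapsto-\boldsymbol{\alpha}$ (complement swap). Case 1 and the symmetry step are handled exactly as in the paper.

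The gap is in your step (3), which is precisely the content of this lemma and is not actually carried out. First, you propose to ``defer the detailed measure estimates to the appendix''; but this lemma \emph{is} the appendix result, so nothing can be deferred — the proof consists of showing $1-\Sigma_R\sim C\,\varepsilon^{-\frac{1}{\alpha_{\min}}\sum_i\alpha_i}$ with $\varepsilon=\mathrm{e}^R$, and the paper does this by explicit iterated integration over the box $[0,\varepsilon]^N$. The nontrivial part, which your sketch does not address, is the sub-case with several negative components $\alpha_j$: there the hypersurface $\prod_i x_i^{\alpha_i}=1$ meets the faces of the box, the integral must be split accordingly, and one obtains $1-\Sigma_R$ as a sum of powers $\varepsilon^{-\frac{1}{\alpha_j}\sum_i\alpha_i}$ over the negative $\alpha_j$, of which the term with $\alpha_j=\alpha_{\min}$ dominates as $\varepsilon\to0$ (the paper extracts this via L'H\^opital). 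Second, the geometric heuristic you give is stated backwards and, taken literally, fails: when $\sum_i\alpha_i>0$ it is the \emph{complement} $\{\sum_i\alpha_i y_i\geq 0\}\cap\R^N_-$, not the basin half-space, that is the thin set, and a ``fraction of a sphere'' argument in the $\boldsymbol{\eta}$-coordinates of \eqref{eq:change} gives no decay at all, since both the half-space and its complement are cones whose angular fractions are scale-invariant (exponent $0$). The decay rate $\varepsilon^{-\frac{1}{\alpha_{\min}}\sum_i\alpha_i}$ only appears after returning to the original variables — i.e.\ after weighting by the Jacobian $\mathrm{e}^{\eta_1+\cdots+\eta_N}$, or, as the paper does and as its measure convention requires, by computing the integrals directly in $x$-coordinates. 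Your enumerated step (4) attributes the exponent to $1-\Sigma_\varepsilon$ correctly, but without the explicit computation (or a correctly weighted Laplace-type asymptotic replacing it) the formula for $F^+$ is asserted rather than proved.
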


See Appendix \ref{sec:Findex} for the proof of Lemma~\ref{lem:Findex} as well as the explicit values of $F^{\textrm{index}}$ when $N=3$.

We can now study the stability of heteroclinic cycles when the basic transition matrices have negative entries. The proof of Theorem~\ref{th:negative_entries} is constructive and provides an expression for the stability indices in terms of the function $F^{\textrm{index}}$.

\begin{theorem}\label{th:negative_entries}
Let $M_{j}$, $j=1,\ldots,m$, be  basic transition matrices of a collection of maps
associated with a heteroclinic cycle. Denote by $q=j_{1},...,j_{L}$, $L\geq1$, all the indices for which $M_q$ has at least one negative entry.
\begin{enumerate}
\item [(a)] If, for at least one $j$, the matrix $M^{\left(j\right)}$
does not satisfy conditions (i)-(iii) of Lemma \ref{lemma5},
then $\sigma_{j}=-\infty$ for all $j=1,\ldots,m$ and the cycle is
not an attractor.
\item [(b)] If the matrices $M^{\left(j\right)}$ satisfies conditions (i)-(iii)
of Lemma \ref{lemma5} for all $j=j_p+1$, $p=1,\ldots,L$, such that $j_p+1\notin \left\{ j_1,\ldots,j_L \right\}$,
then the cycle is f.a.s. Furthermore, for each $j=1,\ldots,m$, there exist vectors $\boldsymbol{\beta}_{1},\boldsymbol{\beta}_{2},\ldots,\boldsymbol{\beta}_{K}\in\R^N$,
such that 
\[
\sigma_{j}=\min_{i=1,\ldots,K}\left\{ F^{\textrm{index}}\left(\boldsymbol{\beta}_{i}\right)\right\} .
\]
\end{enumerate}
\end{theorem}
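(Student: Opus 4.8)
The plan is to treat the two cases separately, in each case identifying the $\delta$-local basin of attraction $\mathcal{B}_\delta^{\pi_j}$ of the origin for $\pi_j$ in the logarithmic coordinates \eqref{eq:change} by means of Lemmas~\ref{lemma5}, \ref{lem:l(U(M))}, \ref{lem:ell_positive} and \ref{lem:cal_U_S}, and then reading off the index through Definition~\ref{def:F_index} and Lemma~\ref{lem:Findex}. For part~(a), suppose $M^{(j_0)}$ fails one of the conditions (i)--(iii) of Lemma~\ref{lemma5} for some $j_0$. Then $\ell\left(U^{-\infty}\left(M^{(j_0)}\right)\right)=0$ by Lemma~\ref{lemma5}, so Lemma~\ref{lem:l(U(M))} gives $\ell\left(\mathcal{U}_S^{M^{(j)}}\right)=0$ — equivalently $\ell\left(\mathcal{B}_\delta^{\pi_j}\right)=0$ — for every $j=1,\ldots,m$ and every $\delta>0$. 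Hence $\Sigma_{\varepsilon,\delta}(x)=0$ along every connection, so the convention in Definition~\ref{def:stab_index} forces $\sigma_{\textnormal{loc},-}(x)=+\infty$ and $\sigma_{\textnormal{loc},+}(x)=0$, whence $\sigma_j=-\infty$ for all $j$; and since $\ell\left(\mathcal{B}_\delta^{\pi_j}\right)=0$ for all $\delta$ the cycle is not f.a.s., hence not an attractor.

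For part~(b), I would first observe that, the matrices $M^{(j)}$ being pairwise similar, conditions (i)--(ii) of Lemma~\ref{lemma5} hold for every $j$; together with the hypothesis that (iii) holds for the indices $j=j_p+1$ with $j_p+1\notin\{j_1,\ldots,j_L\}$, Lemma~\ref{lem:ell_positive} yields $\ell\left(U^{-\infty}\left(M^{(j)}\right)\right)>0$, i.e.\ conditions (i)--(iii) hold, for every $j=1,\ldots,m$. By Lemma~\ref{lem:cal_U_S}, in coordinates \eqref{eq:change} the set of $\boldsymbol{y}\in\R^N_-$ for which $M_{(l,j)}\left(M^{(j)}\right)^k\boldsymbol{y}\to\boldsymbol{-\infty}$ for all $l$ is $U^{-\infty}\left(M^{(j)}\right)\cap\bigcap_{p=1}^L U_0\left(\boldsymbol{\alpha}_1^{j_p};\ldots;\boldsymbol{\alpha}_N^{j_p}\right)$, with $\boldsymbol{\alpha}_s^{j_p}$ the $s$-th row of $M_{(j_p,j)}$. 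Using \eqref{v_max} to write $U^{-\infty}\left(M^{(j)}\right)$ as $\R^N_-$ (when $a_{\max}<0$ for every $\boldsymbol{y}\in\R^N_-$) or as $U_0\left(\boldsymbol{v}^{\max,j};\boldsymbol{0};\ldots;\boldsymbol{0}\right)$ otherwise, and writing each $U_0\left(\boldsymbol{\alpha}_1^{j_p};\ldots;\boldsymbol{\alpha}_N^{j_p}\right)=\bigcap_{s=1}^N U_0\left(\boldsymbol{\alpha}_s^{j_p};\boldsymbol{0};\ldots;\boldsymbol{0}\right)$, this set becomes a finite intersection $\bigcap_{i=1}^K U_0\left(\boldsymbol{\beta}_i;\boldsymbol{0};\ldots;\boldsymbol{0}\right)$, where the finitely many vectors $\boldsymbol{\beta}_i=\boldsymbol{\beta}_i^{(j)}\in\R^N$ consist of $\boldsymbol{v}^{\max,j}$ (only in the second case) together with those rows $\boldsymbol{\alpha}_s^{j_p}$, $s=1,\ldots,N$, $p=1,\ldots,L$, that are not entrywise non-negative, the remaining rows imposing no constraint by the remark following \eqref{U_R}.

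It then remains to reinstate the requirement that $M_{(l,j)}\left(M^{(j)}\right)^k\boldsymbol{y}\in U_S$ for all $l$ and $k\in\N_0$ in \eqref{eq:S-localbasin}, so that $\mathcal{B}_\delta^{\pi_j}=\mathcal{U}_S^{M^{(j)}}$. All the sets above are cones and, for any $\boldsymbol{y}$ in the limit set of the previous paragraph, every iterate already tends to $\boldsymbol{-\infty}$; hence replacing $\boldsymbol{y}$ by $\lambda\boldsymbol{y}$ with $\lambda>0$ large keeps $\boldsymbol{y}$ in the same cone while forcing every iterate into $U_S$, so $\mathcal{U}_S^{M^{(j)}}$ coincides with $\bigcap_{i=1}^K U_R\left(\boldsymbol{\beta}_i;\boldsymbol{0};\ldots;\boldsymbol{0}\right)$ up to a region near the boundary that is immaterial for the $\varepsilon\to0$ scalings of Definition~\ref{def:stab_index}. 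This cone is nonempty and open, hence of positive measure — nonemptiness following from conditions (i)--(iii) for every $M^{(j)}$, as in the proof of Lemma~\ref{lem:ell_positive}, which propagates the dominant eigendirection around the cycle via footnote~\ref{foot} — so $\ell\left(\mathcal{B}_\delta^{\pi_j}\right)>0$ for every $\delta>0$ and the cycle is f.a.s.\ by Lemma~\ref{index_fas}. Finally, invoking Definition~\ref{def:F_index} and Lemma~\ref{lem:Findex} (the index relative to each $U_R\left(\boldsymbol{\beta}_i;\boldsymbol{0};\ldots;\boldsymbol{0}\right)$ is $F^{\textrm{index}}(\boldsymbol{\beta}_i)$), together with the fact that the stability index of a finite intersection of such cones is the minimum of the individual indices, we obtain $\sigma_j=\min_{i=1,\ldots,K}\{F^{\textrm{index}}(\boldsymbol{\beta}_i)\}$, as claimed.

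The step I expect to be the main obstacle is the one in the third paragraph: making rigorous that the bounded part of the definition of $\mathcal{B}_\delta^{\pi_j}$ neither destroys positive measure nor alters the index read off from the cone $\bigcap_i U_R\left(\boldsymbol{\beta}_i;\boldsymbol{0};\ldots;\boldsymbol{0}\right)$ — this is where the positive homogeneity of all the sets involved and the finiteness of $F_j$ in \eqref{eq:poincare} (allowing $F_j$ to be ignored) must be used carefully — together with the auxiliary fact that the stability index of $\bigcap_i U_R\left(\boldsymbol{\beta}_i;\boldsymbol{0};\ldots;\boldsymbol{0}\right)$ equals $\min_i F^{\textrm{index}}(\boldsymbol{\beta}_i)$. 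A secondary but necessary bookkeeping point, handled as in the last paragraph of the proof of Lemma~\ref{lem:cal_U_S}, is that only the matrices $M_{j_p}$ with a negative entry contribute non-trivial constraints, so that $K$ is finite and the vectors $\boldsymbol{\beta}_i$ are precisely those listed.
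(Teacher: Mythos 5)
Your proposal is correct and follows essentially the same route as the paper's proof: part (a) is exactly the paper's appeal to Lemma~\ref{lem:l(U(M))} (spelled out via the convention in Definition~\ref{def:stab_index}), and part (b) reproduces the paper's argument — conditions (i)--(iii) for all $j$ via similarity and Lemma~\ref{lem:ell_positive}, the description of $\mathcal{U}_S^{M^{(j)}}$ through \eqref{v_max} and Lemma~\ref{lem:cal_U_S}, discarding the $U_S$-constraint in the limit $S\to-\infty$, and reading off $\sigma_j$ as the minimum of $F^{\textrm{index}}$ over $\boldsymbol{v}^{\max,j}$ and the rows $\boldsymbol{\alpha}_i^{j_p}$. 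The delicate points you flag (redundancy of the bounded constraint and the minimum rule for intersections of the sets $U_R(\boldsymbol{\beta}_i;\boldsymbol{0};\ldots;\boldsymbol{0})$) are handled in the paper at the same level of detail, so there is no gap relative to the paper's own argument.
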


\begin{proof}
We treat each case separately.
\begin{enumerate}
\item[(a)] This follows immediately from Lemma \ref{lem:l(U(M))}.

\item[(b)]
Using Lemma \ref{lem:ell_positive}, we have $\ell\left(U^{-\infty}\left(M^{\left(j\right)}\right)\right)>0$ for all $j=1,\ldots,m$. Then the cycle is f.a.s.

By Definition \ref{def:stab_index}, the local stability index is computed for $\delta>0$ small, that is, for large negative $S=\ln\left(\delta\right)$.
Given \eqref{eq:S-localbasin},
we can write
\[
\begin{aligned}\mathcal{U}_{S}^{M^{\left(j\right)}}= & U_{0}\left(\boldsymbol{v}^{\max,j};\boldsymbol{0};\ldots;\boldsymbol{0}\right)\cap\left(\bigcap_{p=1}^{L}U_{0}\left(\boldsymbol{\alpha}_{1}^{j_{p}};\boldsymbol{\alpha}_{2}^{j_{p}};\ldots;\boldsymbol{\alpha}_{N}^{j_{p}}\right)\right)\\
 \cap&\left\{ \boldsymbol{y}\in\mathbb{R}_{-}^{N}:\;M_{\left(l,j\right)}\left(M^{\left(j\right)}\right)^{k}\boldsymbol{y}\in U_{S}\textrm{ for all } l=1,\ldots,m,\;k\in\mathbb{N}_{0}\right\}
\end{aligned}
\]
from \eqref{v_max} together with Lemma~\ref{lem:cal_U_S}.
As $S$ approaches $-\infty$, the condition defining the third set above becomes redundant in the sense that it holds true for all $\boldsymbol{y}\in U_R$ with $R<S$.
Therefore, $U_{R}\cap\mathcal{U}_{S}^{M^{\left(j\right)}}$ is reduced to
$$
U_{R}\left(\boldsymbol{v}^{\max,j};\boldsymbol{0};\ldots;\boldsymbol{0}\right)\cap\left(\bigcap_{p=1}^{L}U_{R}\left(\boldsymbol{\alpha}_{1}^{j_{p}};\boldsymbol{\alpha}_{2}^{j_{p}};\ldots;\boldsymbol{\alpha}_{N}^{j_{p}}\right)\right).
$$

The local stability index relative to $U_{R}\left(\boldsymbol{v}^{\max,j};\boldsymbol{0};\ldots;\boldsymbol{0}\right)$
is $F^{\textrm{index}}\left(\boldsymbol{v}^{\max,j}\right)$.
Similarly, the local stability index relative to $U_{R}\left(\boldsymbol{\alpha}_{1}^{j_{p}};\boldsymbol{\alpha}_{2}^{j_{p}};\ldots;\boldsymbol{\alpha}_{N}^{j_{p}}\right)$ is given by
$\min_{i=1,\ldots,N}F^{\textrm{index}}\left(\boldsymbol{\alpha}_{i}^{j_{p}}\right)$ for each $p=1,\ldots,L$ so that the local stability index relative to $\bigcap_{p=1}^{L}U_{R}\left(\boldsymbol{\alpha}_{1}^{j_{p}};\boldsymbol{\alpha}_{2}^{j_{p}};\ldots;\boldsymbol{\alpha}_{N}^{j_{p}}\right)$
is 
$$
\min_{p=1,\ldots,L}\left\{ \min_{i=1,\ldots,N}F^{\textrm{index}}\left(\boldsymbol{\alpha}_{i}^{j_{p}}\right)\right\}.
$$
Thus,
$$
\sigma_{j}= \min\left\{ F^{\textrm{index}}\left(\boldsymbol{v}^{\max,j}\right),\min_{p=1,\ldots,L}\left\{ \min_{i=1,\ldots,N}\left\{ F^{\textrm{index}}\left(\boldsymbol{\alpha}_{i}^{j_{p}}\right)\right\} \right\} \right\} .
$$
\end{enumerate}
\end{proof}

Note that when $U^{-\infty}(M^{(j)})=\R^N_-$ it follows from \eqref{v_max} that $F^{\textrm{index}}\left(\boldsymbol{v}^{\max,j}\right)=+\infty$ and the above expression simplifies. 

\section{A cycle in the Rock-Scissors-Paper game}\label{sec:RSP}

The Rock-Scissors-Paper (RSP) game is well-known: there are two players, $X$ and $Y$. Each player chooses among three actions ($R$, for Rock, $S$, for Scissors, and $P$, for Paper) knowing that $R$ beats $S$, $S$ beats $P$ and $P$ beats $R$. The winning player receives $+1$ while the losing player receives $-1$. When both players choose the same action, there is a draw. We use the description of Sato {\em et al.} \cite{SAC} and attribute $\varepsilon_x, \varepsilon_y \in (-1,1)$ to player $X$ and to player $Y$, respectively, for a drawing action. Under repetition, this game can be described by a dynamical system on a 4-dimensional state space contained in $\R^6$. The state variables are the probabilities of playing each of the actions. We then associate to player $X$ the state vector $\boldsymbol{x}=\left(x_{1},x_{2},x_{3}\right) \in \Delta_X$,
with $x_{1},x_{2},x_{3}\geq0$ and $x_{1}+x_{2}+x_{3}=1$, whose components are the probabilities of player $X$ playing $R$, $S$ and $P$, respectively. Analogously, for player $Y$, we have $\boldsymbol{y}=\left(y_{1},y_{2},y_{3}\right) \in \Delta_Y$ with $y_{1},y_{2},y_{3}\geq0$ and $y_{1}+y_{2}+y_{3}=1$. We denote the colective state space by $\Delta = \Delta_X \times \Delta_Y \subset \R^6$, which is the product of two $2$-dimensional simplices.

The dynamics for this problem are detailed in \cite{AC} and 
\cite{SAC}. The vector field is equivariant by the group $\Gamma = \Z_3$ generated by
$$
(x_1,x_2,x_3;y_1,y_2,y_3) \mapsto (x_3,x_1,x_2;y_3,y_1,y_2).
$$
Under the action of $\Gamma$, there are three group orbits of equilibria
\[
\begin{aligned}\xi_{0}\equiv\varGamma\left(R,P\right)= & \left\{ \left(R,P\right),\left(S,R\right),\left(P,S\right)\right\} \\
\xi_{1}\equiv\varGamma\left(R,S\right)= & \left\{ \left(R,S\right),\left(S,P\right),\left(P,R\right)\right\} \\
\xi_{2}\equiv\varGamma\left(R,R\right)= & \left\{ \left(R,R\right),\left(S,S\right),\left(P,P\right)\right\} ,
\end{aligned}
\]
where each element corresponds to a vertex in $\Delta$. Together with all connections $[\xi_i \rightarrow \xi_j]$, $i\neq j=0,1,2$, these group orbits of equilibria comprise a quotient heteroclinic network, see \cite{AC}.

We apply the methods\footnote{A presentation of the full detail of the calculations involved in this section would not be helpful as an illustration, besides taking us beyond the scope of this article. The full detail may be found in \cite{GSC_RSP}.} of Section \ref{sec:stab_cycles} to calculate the local stability indices along $[\xi_0 \rightarrow \xi_1]$ and $[\xi_1 \rightarrow \xi_0]$. Such connections are $1$-dimensional and constitute the cycle $C_0$ in \cite{AC}. Note that $C_0$ is not simple: although the connections are contained in 2-dimensional invariant spaces, these are not fixed-point spaces. Definition~\ref{quasi-simple} in turn holds so that $C_0$ is quasi-simple. Following \cite[Remark p.\ 1603]{AC}, we assume the global maps to be the identity\footnote{Even though the global maps are the identity there is a permutation when going from one local basis to the next in consecutive cross-sections. In \cite{AC} these permutations were overlooked. See the comments at the end of this section.}, ensuring the validity of Assumption A. 

In appropriate coordinates \eqref{eq:change}, the basic transition matrices for this cycle are
\[
M_{0} =\left[\begin{array}{ccc}
\dfrac{1-\varepsilon_y}{2} & 1 & 0\\
 & & \\
-\dfrac{1+\varepsilon_x}{2} & 0 & 1\\
 & & \\
1 & 0 & 0
\end{array}\right], \; \; \;  \;
M_{1} =\left[\begin{array}{ccc}
\dfrac{1-\varepsilon_x}{2} & 1 & 0\\
 & & \\
-\dfrac{1+\varepsilon_y}{2} & 0 & 1\\
 & & \\
1 & 0 & 0
\end{array}\right].
\]
Conditions \emph{(i)-(iii)} of Lemma \ref{lemma5} hold true for the products $M^{(0)}=M_1 M_0$ and $M^{(1)}=M_0 M_1$ if and only if $\varepsilon_x+\varepsilon_y<0$.  
In case (b) of Theorem~\ref{th:negative_entries} we make use of the function
$F^{\textrm{index}}$ in Appendix \ref{app:F_index3} to obtain $\sigma_{0}=\sigma_{\textrm{loc}}\left[\xi_{1}\rightarrow\xi_{0}\right]$ and $\sigma_{1}=\sigma_{\textrm{loc}}\left[\xi_{0}\rightarrow\xi_{1}\right]$.

\begin{lemma}\label{stab_RSP}
Assuming that $\varepsilon_x+\varepsilon_y<0$,  the local stability indices  for the cycle $C_0$ of the RSP game are finite and given by
$$
\sigma_{0}=\min\left\{ \frac{1-\varepsilon_{x}}{1+\varepsilon_{x}}, \frac{(1-\varepsilon_y)^2}{2(1+\varepsilon_y)}\right\}>0, \;\;\;\; \sigma_{1}=\min\left\{  \frac{1-\varepsilon_{y}}{1+\varepsilon_{y}}, \frac{(1-\varepsilon_x)^2}{2(1+\varepsilon_x)} \right\}>0.
$$
\end{lemma}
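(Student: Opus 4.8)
The plan is to apply case (b) of Theorem~\ref{th:negative_entries} to the two basic transition matrices $M_0,M_1$ above, read off an expression for $\sigma_0=\sigma_{\textrm{loc}}[\xi_1\to\xi_0]$, and then obtain $\sigma_1$ by symmetry. Since $\varepsilon_x,\varepsilon_y\in(-1,1)$, each of $M_0$ and $M_1$ has exactly one strictly negative entry, so here $L=2$ and $\{j_1,j_2\}=\{0,1\}$, and the hypothesis of Theorem~\ref{th:negative_entries}(b) reduces to conditions (i)-(iii) of Lemma~\ref{lemma5} holding for the two return matrices $M^{(0)}=M_1M_0$ and $M^{(1)}=M_0M_1$, which by the remark preceding the statement is equivalent to the standing assumption $\varepsilon_x+\varepsilon_y<0$. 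Consequently $\ell(U^{-\infty}(M^{(j)}))>0$ for $j=0,1$, so by Lemma~\ref{index_fas} the cycle $C_0$ is f.a.s., and the proof of Theorem~\ref{th:negative_entries}(b), carried out in $H_0^{in}$ where the relevant partial turns are $M_{(0,0)}=M_0$ and $M_{(1,0)}=M_1M_0$, gives
\[
\sigma_0=\min\Bigl\{F^{\textrm{index}}(\boldsymbol{v}^{\max,0}),\ \min_{i=1,2,3}F^{\textrm{index}}(\boldsymbol{\alpha}_i^{0}),\ \min_{i=1,2,3}F^{\textrm{index}}(\boldsymbol{\alpha}_i^{1})\Bigr\},
\]
where $\boldsymbol{\alpha}_i^{0},\boldsymbol{\alpha}_i^{1}$ are the $i$-th rows of $M_0$ and of $M_1M_0$, and $\boldsymbol{v}^{\max,0}$ is the row of $(P^{(0)})^{-1}$ attached to $\lambda_{\max}$.

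The next step is to evaluate each $F^{\textrm{index}}$ using Lemma~\ref{lem:Findex}, equivalently the explicit $N=3$ formula of Appendix~\ref{app:F_index3}. The first and third rows of $M_0$, namely $\bigl(\tfrac{1-\varepsilon_y}{2},1,0\bigr)$ and $(1,0,0)$, are non-negative, hence contribute $+\infty$; the second row $\bigl(-\tfrac{1+\varepsilon_x}{2},0,1\bigr)$ has $\alpha_{\min}=-\tfrac{1+\varepsilon_x}{2}<0$ and entry-sum $\tfrac{1-\varepsilon_x}{2}>0$, hence contributes $\tfrac{1-\varepsilon_x}{1+\varepsilon_x}$. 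Carrying out the product $M_1M_0$, its third row is again $\bigl(\tfrac{1-\varepsilon_y}{2},1,0\bigr)$ (so $+\infty$), its second row $\bigl(\tfrac{3+\varepsilon_y^2}{4},-\tfrac{1+\varepsilon_y}{2},0\bigr)$ has entry-sum $\tfrac{(1-\varepsilon_y)^2}{4}>0$ and contributes $\tfrac{(1-\varepsilon_y)^2}{2(1+\varepsilon_y)}$, and its first row $\bigl(\tfrac{(1-\varepsilon_x)(1-\varepsilon_y)}{4}-\tfrac{1+\varepsilon_x}{2},\tfrac{1-\varepsilon_x}{2},1\bigr)$ is either non-negative ($+\infty$) or, when its first entry is negative, has entry-sum $(1-\varepsilon_x)\tfrac{5-\varepsilon_y}{4}>0$ and, after clearing denominators, gives a value that is $\geq\tfrac{1-\varepsilon_x}{1+\varepsilon_x}$ precisely because $(1+\varepsilon_x)(3-\varepsilon_y)+(1-\varepsilon_x)(1-\varepsilon_y)>0$ is a sum of two manifestly positive terms. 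So the first row of $M_1M_0$ never lowers the minimum below $\tfrac{1-\varepsilon_x}{1+\varepsilon_x}$.

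It then remains to show $F^{\textrm{index}}(\boldsymbol{v}^{\max,0})=+\infty$, which by the remark following Theorem~\ref{th:negative_entries} holds as soon as $U^{-\infty}(M^{(0)})=\R^3_-$, that is, as soon as $\boldsymbol{v}^{\max,0}$ is sign-definite. Here I would use that $\boldsymbol{v}^{\max,0}$ is a \emph{left} $\lambda_{\max}$-eigenvector of $M^{(0)}=M_1M_0$: its third-component equation forces $v_1=\lambda_{\max}v_3$, and its first-component equation forces $v_2=\tfrac{v_3}{(M^{(0)})_{21}}\bigl(\lambda_{\max}(\lambda_{\max}-(M^{(0)})_{11})-(M^{(0)})_{31}\bigr)$, where $(M^{(0)})_{21}=\tfrac{3+\varepsilon_y^2}{4}>0$ and $(M^{(0)})_{31}=\tfrac{1-\varepsilon_y}{2}>0$. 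On the other hand the \emph{right} $\lambda_{\max}$-eigenvector $\boldsymbol{w}^{\max}$ is sign-definite by Lemma~\ref{lemma5}(iii); normalising $w_i>0$, the first and third components of $M^{(0)}\boldsymbol{w}^{\max}=\lambda_{\max}\boldsymbol{w}^{\max}$ give $\lambda_{\max}-(M^{(0)})_{11}=\bigl(\tfrac{1-\varepsilon_x}{2}w_2+w_3\bigr)/w_1>w_3/w_1$ and $\lambda_{\max}w_3>(M^{(0)})_{31}w_1$, whence $\lambda_{\max}(\lambda_{\max}-(M^{(0)})_{11})>\lambda_{\max}w_3/w_1>(M^{(0)})_{31}$. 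So the bracket in the formula for $v_2$ is positive, $v_1,v_2,v_3$ all share the sign of $v_3$, and therefore $F^{\textrm{index}}(\boldsymbol{v}^{\max,0})=+\infty$. The minimum then collapses to $\sigma_0=\min\bigl\{\tfrac{1-\varepsilon_x}{1+\varepsilon_x},\tfrac{(1-\varepsilon_y)^2}{2(1+\varepsilon_y)}\bigr\}$, which is finite and strictly positive because $\varepsilon_x,\varepsilon_y\in(-1,1)$.

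Finally, $\sigma_1$ needs no fresh computation: from the displayed matrices $M_0(\varepsilon_x,\varepsilon_y)=M_1(\varepsilon_y,\varepsilon_x)$, hence $M^{(1)}(\varepsilon_x,\varepsilon_y)=M^{(0)}(\varepsilon_y,\varepsilon_x)$, so the whole derivation of $\sigma_0$ transfers verbatim to $H_1^{in}$ with $\varepsilon_x$ and $\varepsilon_y$ interchanged, giving $\sigma_1=\min\bigl\{\tfrac{1-\varepsilon_y}{1+\varepsilon_y},\tfrac{(1-\varepsilon_x)^2}{2(1+\varepsilon_x)}\bigr\}>0$. The step I expect to be the main obstacle is the sign-definiteness of $\boldsymbol{v}^{\max,0}$: unlike the right eigenvector it is not handed over by Lemma~\ref{lemma5}(iii), and because both basic matrices carry a negative entry it cannot simply be transported between the similar matrices $M^{(0)}$ and $M^{(1)}$, so one really must exploit the particular positive entries of $M^{(0)}$; a secondary, routine nuisance is keeping track of which partial-turn matrices enter the minimum given that $m=2$ and that both $M_0$ and $M_1$ have a negative entry.
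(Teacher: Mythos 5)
Your proposal is correct and follows essentially the same route as the paper: apply Theorem~\ref{th:negative_entries}(b) with the partial-turn matrices $M_0$ and $M^{(0)}=M_1M_0$, evaluate $F^{\textrm{index}}$ on their rows via Lemma~\ref{lem:Findex}, discard the row of $M^{(0)}$ that cannot lower the minimum, and obtain $\sigma_1$ by the $\varepsilon_x\leftrightarrow\varepsilon_y$ symmetry. In fact you go slightly beyond the paper by supplying explicit justifications (the left/right eigenvector estimate giving $U^{-\infty}(M^{(0)})=\mathbb{R}^3_-$, and the factorisation $(1+\varepsilon_x)(3-\varepsilon_y)+(1-\varepsilon_x)(1-\varepsilon_y)>0$ for the row comparison) of two facts the paper only asserts, and both check out.
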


\begin{proof}
We can show that $U^{-\infty}(M^{(0)})=U^{-\infty}(M^{(1)})=\R^3_-$. Therefore, the objects of the function $F^{\textrm{index}}$ are the rows of $M_0$ and of $M^{(0)}$ for the calculation of $\sigma_0$ and the rows of $M_1$ and of $M^{(1)}$ for that of $\sigma_1$. Actually, applying Theorem~\ref{th:negative_entries}, we have 
\begin{eqnarray*}
\sigma_{0} & = & \min \left\{ F^{\textrm{index}}\left(\dfrac{1-\varepsilon_{y}}{2},1,0\right),F^{\textrm{index}}\left(-\dfrac{1+\varepsilon_{x}}{2},0,1\right), F^{\textrm{index}}\left(1,0,0\right),\right.\\
 & & \left. F^{\textrm{index}}\left(\dfrac{-1-3\varepsilon_x-\varepsilon_y+\varepsilon_x\varepsilon_y}{4}, \dfrac{1-\varepsilon_x}{2}, 1\right),
 F^{\textrm{index}}\left(\dfrac{3+\varepsilon_y^2}{4}, -\dfrac{1+\varepsilon_y}{2},0\right) \right\}.
\end{eqnarray*}

Using Lemma~\ref{lem:Findex} we get $F^{\textrm{index}}\left(\frac{1-\varepsilon_{y}}{2},1,0\right)=F^{\textrm{index}}\left(1,0,0\right)=+\infty$. Since 
$$
F^{\textrm{index}}\left(\dfrac{-1-3\varepsilon_x-\varepsilon_y+\varepsilon_x\varepsilon_y}{4}, \dfrac{1-\varepsilon_x}{2}, 1\right) >  F^{\textrm{index}}\left(-\dfrac{1+\varepsilon_{x}}{2},0,1\right)
$$
the minimum is between
$$
F^{\textrm{index}}\left(-\dfrac{1+\varepsilon_{x}}{2},1,0\right)=-\dfrac{1}{-\frac{1+\varepsilon_{x}}{2}}\left(-\dfrac{1+\varepsilon_{x}}{2}+1\right)=\dfrac{1-\varepsilon_{x}}{1+\varepsilon_{x}}>0
$$
and 
$$
F^{\textrm{index}}\left(\dfrac{3+\varepsilon_y^2}{4}, -\dfrac{1+\varepsilon_y}{2},0\right)=\dfrac{(1-\varepsilon_y)^2}{2(1+\varepsilon_y)}>0.
$$
Concerning $\sigma_1$, we have
\begin{eqnarray*}
\sigma_{1} & = & \min \left\{ F^{\textrm{index}}\left(\dfrac{1-\varepsilon_{x}}{2},1,0\right),F^{\textrm{index}}\left(-\dfrac{1+\varepsilon_{y}}{2},0,1\right), F^{\textrm{index}}\left(1,0,0\right),\right.\\
 & & \left. F^{\textrm{index}}\left(\dfrac{-1-3\varepsilon_y-\varepsilon_x+\varepsilon_y\varepsilon_x}{4}, \dfrac{1-\varepsilon_y}{2}, 1\right),
 F^{\textrm{index}}\left(\dfrac{3+\varepsilon_x^2}{4}, -\dfrac{1+\varepsilon_x}{2},0\right) \right\}.
\end{eqnarray*}

The result follows easily by noting that the quantities in $\sigma_1$ are obtained from those in $\sigma_0$ by interchanging $\varepsilon_x$ and $\varepsilon_y$. 
\end{proof}

Thus it follows that the cycle $C_0$ in the RSP game is e.a.s. if and only if $\varepsilon_x+\varepsilon_y<0$. 

We end with two brief comments concerning Lemma 4.9 in \cite{AC} that states that none of the cycles in the RSP network is e.a.s., and which must be incorrect in  view of Lemma~\ref{stab_RSP} above. The first comment is that the stability index provides a very powerful and systematic tool for the study of stability and it was not available at the time of \cite{AC}. The second is that the authors of \cite{AC} forgot to take into account the permutation between consecutive cross-sections. 

On a positive note, the e.a.s. of the cycle $C_0$ of the RSP game is in accordance with the numerical simulations of \cite{SAC}, which show this cycle as possessing some attracting properties, see \cite[Figure 15, top]{SAC}.

\paragraph{Acknowledgements:} We are grateful to A.\ Lohse for assistance in the calculation of the 3-variable case of the function $F^{\textrm{index}}$, and to A.\ Rodrigues for fruitful discussions.
We thank an anonymous referee of a previous version for some very pertinent comments.

Both authors were partially supported by CMUP (UID/MAT/00144/2013), which is funded by FCT (Portugal) with national (MEC) and European structural funds (FEDER), under the partnership agreement PT2020. L.\ Garrido-da-Silva is the recipient of the doctoral grant PD/BD/105731/2014 from FCT (Portugal).

\appendix
\section{The function $F^{\textrm{index}}$\label{sec:Findex}}

We prove Lemma \ref{lem:Findex}. Consider a return map $\pi \equiv \pi_1:\mathbb{R}^{N}\rightarrow\mathbb{R}^{N}$ \eqref{pi_map} associated with a point on a heteroclinic connection that intersects $H^{in}_1$. For small $\delta>0$, let $\mathcal{B}_{\delta}^{\pi}$ be the $\delta$-local basin of attraction of $\boldsymbol{0}\in\mathbb{R}^{N}$ for the map $\pi$.

Given $\boldsymbol{\alpha}=\left(\alpha_{1},\alpha_{2},\ldots,\alpha_{N}\right)\in\mathbb{R}^{N}$, suppose that $\mathcal{B}_{\delta}^{\pi}$ in $H^{in}_1$ in the new coordinates \eqref{eq:change} is $U_{R}\left(\boldsymbol{\alpha};\boldsymbol{0};\ldots;\boldsymbol{0}\right)$ for some large $R<0$. 
Since the measure of a set is always regarded as its measure in the original variables, we represent the latter set in original coordinates $\left(w,\boldsymbol{z}\right)=\left(x_1,x_2,\ldots,x_N\right)\equiv\boldsymbol{x}$ as $\tilde{U}_{R}\left(\boldsymbol{\alpha}\right)$. Then,
\begin{equation}\label{eq:tilde_U}
\tilde{U}_{R}\left(\boldsymbol{\alpha}\right)=\left\{ \boldsymbol{x}\in \tilde{U}_R:\left|x_{1}^{\alpha_{1}}x_{2}^{\alpha_{2}}\cdots x_{N}^{\alpha_{N}}\right|<1\right\}.
\end{equation}
where
$$
\tilde{U}_R=\left\{ \boldsymbol{x}:\textrm{ }\max_{i}\textrm{ }\left|x_{i}\right|<\textrm{e}^R\right\}.
$$ 
In particular, the set $\tilde{U}_R$ is an open ball of radius  $\varepsilon=\textrm{e}^R>0$ centred at $\boldsymbol{0}\in\mathbb{R}^{N}$ for the maximum norm. Hence, by virtue of Definition \ref{def:F_index}, the value of $F^{\textrm{index}}\left(\boldsymbol{\alpha}\right)=F^{+}\left(\boldsymbol{\alpha}\right)-F^{-}\left(\boldsymbol{\alpha}\right)$ quantifies the local extent of $\tilde{U}_{R}\left(\boldsymbol{\alpha}\right)$ such that 
\[
F^{-}\left(\boldsymbol{\alpha}\right)=\lim_{R\rightarrow-\infty}\frac{\ln\left(\Sigma_{R}\right)}{R},\qquad F^{+}\left(\boldsymbol{\alpha}\right)=\lim_{R\rightarrow-\infty}\frac{\ln\left(1-\Sigma_{R}\right)}{R}
\]
with
\[
\Sigma_{R}=\frac{\ell\left(\tilde{U}_{R}\left(\boldsymbol{\alpha}\right)\right)}{\ell\left(\tilde{U}_R\right)}.
\]

Notice that the region in $\mathbb{R}^{N}$ determined by the whole $\tilde{U}_{R}\left(\boldsymbol{\alpha}\right)$ is invariant with respect to the reflections $x_{j}\mapsto-x_{j}$, $j=1,\ldots,N$. Thus, we only consider $\boldsymbol{x}\in\mathbb{R}_{+}^{N}$.
Define $\alpha_{\min}=\min_{i=1,\ldots,N}\alpha_{i}$. The construction of $F^{+}$ proceeds in three cases. 

\paragraph{Case 1: } 

If $\alpha_{\min}\geq0$, then $\alpha_{i}\geq0$ for all $i=1,\ldots,N$.
Given \eqref{eq:tilde_U}, we have $\alpha_i >0$ for at least one $i=1,\ldots,N$. Then, for sufficiently large negative $R$, $\tilde{U}_{R}\left(\boldsymbol{\alpha}\right)$ is reduced to $\tilde{U}_R$. Thus, $\Sigma_R=1$ and $F^{+}\left(\boldsymbol{\alpha}\right)=+\infty$. 

\paragraph{Case 2:}

If $\alpha_{\min}<0$ and $\sum_{i=1}^{N}\alpha_{i}>0$, then there exist $K\geq1$ and $i_{1},\ldots,i_{K}\in\left\{ 1,\ldots,N\right\} $ such that $\alpha_{i_{1}},\ldots,\alpha_{i_{K}}<0$ and $\alpha_{i}\geq0$ for any $i\neq i_{\text{1}},\ldots,i_{K}$ or $\alpha_{i_{1}},\ldots,\alpha_{i_{K}}\geq0$ and $\alpha_{i}<0$ for any $i\neq i_{\text{1}},\ldots,i_{K}$. We
provide the proof when $K=i_{K}=1$ as the technique remains the same but the calculations are unnecessarily complicated.

Suppose first that $\alpha_{1}<0$, $\alpha_{i}\geq0$ for all $i=2,\ldots,N$. Then $\alpha_{\min}=\alpha_{1}$. Given 
$\boldsymbol{x}\in\tilde{U}_{R}\left(\boldsymbol{\alpha}\right)$, we have $\boldsymbol{x}\in\tilde{U}_R$ and
\[
\begin{aligned}x_{1} & >x_{2}^{-\frac{\alpha_{2}}{\alpha_{1}}}x_{3}^{-\frac{\alpha_{3}}{\alpha_{1}}}\cdots x_{N}^{-\frac{\alpha_{N}}{\alpha_{1}}}\\
x_{i} & <x_{i_{1}}^{-\frac{\alpha_{i_{1}}}{\alpha_{i}}}x_{i_{2}}^{-\frac{\alpha_{i_{2}}}{\alpha_{i}}}\cdots x_{i_{N-1}}^{-\frac{\alpha_{i_{N-1}}}{\alpha_{i}}}, &  & i=2,\ldots,N;\textrm{ }i\neq i_{l},\textrm{ }l=1,\ldots,N-1,
\end{aligned}
\]
so that 
\[
\begin{aligned}\ell\left(\left.\tilde{U}_R\right\backslash \tilde{U}_{R}\left(\boldsymbol{\alpha}\right)\right) & =\int_{0}^{\varepsilon}\int_{0}^{\varepsilon}\cdots\int_{0}^{\varepsilon}\int_{0}^{x_{2}^{-\frac{\alpha_{2}}{\alpha_{1}}}x_{3}^{-\frac{\alpha_{3}}{\alpha_{1}}}\cdots x_{N}^{-\frac{\alpha_{N}}{\alpha_{1}}}}dx_{1}dx_{2}\ldots dx_{N-1}dx_{N}\\
 & =\frac{\varepsilon^{N-\frac{1}{\alpha_{1}}\sum_{i=1}^{N}\alpha_{i}}}{\prod_{i=2}^{N}\left(-\frac{\alpha_{i}}{\alpha_{1}}+1\right)}
\end{aligned}
\]
and
\[
1-\Sigma_{R}=\frac{\ell\left(\left.\tilde{U}_R\right\backslash \tilde{U}_{R}\left(\boldsymbol{\alpha}\right)\right)}{\ell\left(\tilde{U}_R\right)}=\frac{\frac{\varepsilon^{N-\frac{1}{\alpha_{1}}\sum_{i=1}^{N}\alpha_{i}}}{\prod_{i=2}^{N}\left(-\frac{\alpha_{i}}{\alpha_{1}}+1\right)}}{\varepsilon^{N}}=\frac{\varepsilon^{-\frac{1}{\alpha_{1}}\sum_{i=1}^{N}\alpha_{i}}}{\prod_{i=2}^{N}\left(-\frac{\alpha_{i}}{\alpha_{1}}+1\right)}.
\]
Therefore
\[
\frac{\ln\left(1-\Sigma_{R}\right)}{R}=-\frac{1}{\alpha_{1}}\sum_{i=1}^{N}\alpha_{i}-\frac{\ln\left(\prod_{i=2}^{N}\left(-\frac{\alpha_{i}}{\alpha_{1}}+1\right)\right)}{R}
\]
and 
\[
F^{+}\left(\boldsymbol{\alpha}\right)=-\frac{1}{\alpha_{1}}\sum_{i=1}^{N}\alpha_{i}-\lim_{R\rightarrow-\infty}\frac{\ln\left(\prod_{i=2}^{N}\left(-\frac{\alpha_{i}}{\alpha_{1}}+1\right)\right)}{R}
=-\frac{1}{\alpha_{\min}}\sum_{i=1}^{N}\alpha_{i}>0.
\]

Suppose now that $\alpha_{1}\geq0$, $\alpha_{i}<0$ for all $i=2,\ldots,N$. 
If $\boldsymbol{x}\in\tilde{U}_R\left(\boldsymbol{\alpha}\right)$, then $\boldsymbol{x}\in\tilde{U}_R$ and
\[
\begin{aligned}x_{1} & <x_{2}^{-\frac{\alpha_{2}}{\alpha_{1}}}x_{3}^{-\frac{\alpha_{3}}{\alpha_{1}}}\cdots x_{N}^{-\frac{\alpha_{N}}{\alpha_{1}}}\\
x_{i} & >x_{i_{1}}^{-\frac{\alpha_{i_{1}}}{\alpha_{i}}}x_{i_{2}}^{-\frac{\alpha_{i_{2}}}{\alpha_{i}}}\cdots x_{i_{N-1}}^{-\frac{\alpha_{i_{N-1}}}{\alpha_{i}}}, &  & i=2,\ldots,N;\textrm{ }i\neq i_{l},\textrm{ }l=1,\ldots,N-1.
\end{aligned}
\]
In calculating the measure of the set determined by these inequalities, we must take into account how the respective boundaries intersect the boundaries of $\tilde{U}_R$.
These intersections lead to a splitting of the integral that expresses the measure of $\tilde{U}_R \backslash \tilde{U}_{R}\left(\boldsymbol{\alpha}\right)$
(and of $\tilde{U}_R\left(\boldsymbol{\alpha}\right)$) which was not necessary before. Accordingly, the integrals that follow have a rather daunting aspect. Indeed,
\[
\begin{aligned} & \ell\left(\left.\tilde{U}_R\right\backslash \tilde{U}_{R}\left(\boldsymbol{\alpha}\right)\right)=\\
= & \int_{0}^{\varepsilon}\int_{0}^{\varepsilon}\int_{0}^{\min\left\{ \varepsilon,\varepsilon^{-\sum_{i=4}^{N}\frac{\alpha_{i}}{\alpha_{3}}}x_{1}^{-\frac{\alpha_{1}}{\alpha_{3}}}x_{2}^{-\frac{\alpha_{2}}{\alpha_{3}}}\right\} }\int_{0}^{\min\left\{ \varepsilon,\varepsilon^{-\sum_{i=5}^{N}\frac{\alpha_{i}}{\alpha_{4}}}x_{1}^{-\frac{\alpha_{1}}{\alpha_{4}}}x_{2}^{-\frac{\alpha_{2}}{\alpha_{4}}}x_{3}^{-\frac{\alpha_{3}}{\alpha_{4}}}\right\} }\cdots\\
 & \cdots\int_{0}^{\min\left\{ \varepsilon,\varepsilon^{-\frac{\alpha_{N}}{\alpha_{N-1}}}x_{1}^{-\frac{\alpha_{1}}{\alpha_{N-1}}}\ldots x_{N-2}^{-\frac{\alpha_{N-2}}{\alpha_{N-1}}}\right\} }\int_{0}^{\min\left\{ \varepsilon,x_{1}^{-\frac{\alpha_{1}}{\alpha_{N}}}\ldots x_{N-1}^{-\frac{\alpha_{N-1}}{\alpha_{N}}}\right\} }dx_{N}dx_{N-1}\ldots dx_{4}dx_{3}dx_{2}dx_{1}\\
= & \int_{0}^{\varepsilon}\left\{ \int_{0}^{\varepsilon^{-\sum_{i=3}^{N}\frac{\alpha_{i}}{\alpha_{2}}}x_{1}^{-\frac{\alpha_{1}}{\alpha_{2}}}}\int_{0}^{\varepsilon}\cdots\int_{0}^{\varepsilon}dx_{N}\ldots dx_{3}dx_{2}+\right.\\
 & +\int_{\varepsilon^{-\sum_{i=3}^{N}\frac{\alpha_{i}}{\alpha_{2}}}x_{1}^{-\frac{\alpha_{1}}{\alpha_{2}}}}^{\varepsilon}\int_{0}^{\varepsilon^{-\sum_{i=4}^{N}\frac{\alpha_{i}}{\alpha_{3}}}x_{1}^{-\frac{\alpha_{1}}{\alpha_{3}}}x_{2}^{-\frac{\alpha_{2}}{\alpha_{3}}}}\int_{0}^{\varepsilon}\cdots\int_{0}^{\varepsilon}dx_{N}\ldots dx_{4}dx_{3}dx_{2}+\\
 & \begin{aligned}+\int_{\varepsilon^{-\sum_{i=3}^{N}\frac{\alpha_{i}}{\alpha_{2}}}x_{1}^{-\frac{\alpha_{1}}{\alpha_{2}}}}^{\varepsilon}\int_{\varepsilon^{-\sum_{i=4}^{N}\frac{\alpha_{i}}{\alpha_{3}}}x_{1}^{-\frac{\alpha_{1}}{\alpha_{3}}}x_{2}^{-\frac{\alpha_{2}}{\alpha_{3}}}}^{\varepsilon} & \int_{0}^{\varepsilon^{-\sum_{i=5}^{N}\frac{\alpha_{i}}{\alpha_{4}}}x_{1}^{-\frac{\alpha_{1}}{\alpha_{4}}}x_{2}^{-\frac{\alpha_{2}}{\alpha_{4}}}x_{3}^{-\frac{\alpha_{3}}{\alpha_{4}}}}\\
 & \int_{0}^{\varepsilon}\cdots\int_{0}^{\varepsilon}dx_{N}\ldots dx_{5}dx_{4}dx_{3}dx_{2}+
\end{aligned}
\\
 &+\cdots+\\
\\
 & \begin{aligned}+\int_{\varepsilon^{-\sum_{i=3}^{N}\frac{\alpha_{i}}{\alpha_{2}}}x_{1}^{-\frac{\alpha_{1}}{\alpha_{2}}}}^{\varepsilon}\ldots & \int_{\varepsilon^{-\sum_{i=N-1}^{N}\frac{\alpha_{i}}{\alpha_{N-2}}}x_{1}^{-\frac{\alpha_{1}}{\alpha_{N-2}}}\ldots x_{N-3}^{-\frac{\alpha_{N-3}}{\alpha_{N-2}}}}^{\varepsilon}\\
 & \int_{0}^{\varepsilon^{-\frac{\alpha_{N}}{\alpha_{N-1}}}x_{1}^{-\frac{\alpha_{1}}{\alpha_{N-1}}}\ldots x_{N-2}^{-\frac{\alpha_{N-2}}{\alpha_{N-1}}}}\int_{0}^{\varepsilon}dx_{N}dx_{N-1}dx_{N-2}\ldots dx_{2}+
\end{aligned}
\\
 & \begin{aligned}+ & \int_{\varepsilon^{-\sum_{i=3}^{N}\frac{\alpha_{i}}{\alpha_{2}}}x_{1}^{-\frac{\alpha_{1}}{\alpha_{2}}}}^{\varepsilon}\ldots\int_{\varepsilon^{-\sum_{i=N-1}^{N}\frac{\alpha_{i}}{\alpha_{N-2}}}x_{1}^{-\frac{\alpha_{1}}{\alpha_{N-2}}}\ldots x_{N-3}^{-\frac{\alpha_{N-3}}{\alpha_{N-2}}}}^{\varepsilon}\\
 & \left.\int_{\varepsilon^{-\frac{\alpha_{N}}{\alpha_{N-1}}}x_{1}^{-\frac{\alpha_{1}}{\alpha_{N-1}}}\ldots x_{N-2}^{-\frac{\alpha_{N-2}}{\alpha_{N-1}}}}^{\varepsilon}\int_{0}^{x_{1}^{-\frac{\alpha_{1}}{\alpha_{N}}}\ldots x_{N-1}^{-\frac{\alpha_{N-1}}{\alpha_{N}}}}dx_{N}dx_{N-1}dx_{N-2}\ldots dx_{2}\right\} dx_{1}
\end{aligned}
\\
= & \frac{\varepsilon^{N-\frac{1}{\alpha_{2}}\sum_{i=1}^{N}\alpha_{i}}}{\prod_{i=1,i\neq2}^{N}\left(-\frac{\alpha_{i}}{\alpha_{2}}+1\right)}+\frac{\varepsilon^{N-\frac{1}{\alpha_{3}}\sum_{i=1}^{N}\alpha_{i}}}{\prod_{i=1,i\neq3}^{N}\left(-\frac{\alpha_{i}}{\alpha_{3}}+1\right)}+\cdots+\frac{\varepsilon^{N-\frac{1}{\alpha_{N}}\sum_{i=1}^{N}\alpha_{i}}}{\prod_{i=1,i\neq N}^{N}\left(-\frac{\alpha_{i}}{\alpha_{N}}+1\right)}
\end{aligned}
\]
and
\[
1-\Sigma_{R}=\frac{\ell\left(\left.\tilde{U}_R\right\backslash \tilde{U}_{R}\left(\boldsymbol{\alpha}\right)\right)}{\ell\left(\tilde{U}_R\right)}=\sum_{j=2}^{N}\frac{\varepsilon^{-\frac{1}{\alpha_{j}}\sum_{i=1}^{N}\alpha_{i}}}{\prod_{i=1\neq j}^{N}\left(-\frac{\alpha_{i}}{\alpha_{j}}+1\right)}.
\]
Consequently, 
 $\frac{\ln\left(1-\Sigma_{R}\right)}{R}\underset{{\scriptstyle R\rightarrow-\infty}}{\rightarrow}\frac{\infty}{\infty}.$
Using L'H\^opital's rule, we obtain 
\begin{equation}\label{eq:hopital}
\frac{\frac{d}{dR}\ln\left(1-\Sigma_{R}\right)}{\frac{d}{dR}R}=\frac{\sum_{j=2}^{N}\left(-\frac{1}{\alpha_{j}}\sum_{i=1}^{N}\alpha_{i}\right)\frac{\varepsilon^{-\frac{1}{\alpha_{j}}\sum_{i=1}^{N}\alpha_{i}}}{\prod_{i=1\neq j}^{N}\left(-\frac{\alpha_{i}}{\alpha_{j}}+1\right)}}{\sum_{j=2}^{N}\frac{\varepsilon^{-\frac{1}{\alpha_{j}}\sum_{i=1}^{N}\alpha_{i}}}{\prod_{i=1\neq j}^{N}\left(-\frac{\alpha_{i}}{\alpha_{j}}+1\right)}}.
\end{equation}
Since $R<0$ is sufficiently large and therefore $\varepsilon=\textrm{e}^R>0$ is sufficiently small, the highest power of $\varepsilon$ in \eqref{eq:hopital} is the one with lowest exponent,
that is,
\[
\max_{j=2,\ldots,N}\left\{ \varepsilon^{-\frac{1}{\alpha_{j}}\sum_{i=1}^{N}\alpha_{i}}\right\} =\varepsilon^{\min_{j=2,\ldots,N}\left\{ -\frac{1}{\alpha_{j}}\sum_{i=1}^{N}\alpha_{i}\right\} }=\varepsilon^{-\frac{1}{\alpha_{\min}}\sum_{i=1}^{N}\alpha_{i}}.
\]
For $R\rightarrow-\infty$, i.e. $\varepsilon=\textrm{e}^R\rightarrow0$, \eqref{eq:hopital} becomes asymptotically
close to 
$$
-\frac{1}{\alpha_{\min}}\sum_{i=1}^{N}\alpha_{i},
$$
which yields
$$
F^{+}\left(\boldsymbol{\alpha}\right)=-\frac{1}{\alpha_{\min}}\sum_{i=1}^{N}\alpha_{i}>0.
$$

\paragraph{Case 3:} 

If $\sum_{i=1}^{N}\alpha_{i}<0$, then $\alpha_{i}\leq0$ for all $i=1,\ldots,N$ or there exist $K\geq1$ and $i_{1},\ldots,i_{K}\in\left\{ 1,\ldots,N\right\} $
such that either $\alpha_{i_{1}},\ldots,\alpha_{i_{K}}\leq0$ and $\alpha_{i}<0$ for any $i\neq i_{\text{1}},\ldots,i_{K}$ or $\alpha_{i_{1}},\ldots,\alpha_{i_{K}}>0$ and $\alpha_{i}\leq0$ for any $i\neq i_{\text{1}},\ldots,i_{K}$. From the two previous cases and the fact that $F^{+}\left(-\boldsymbol{\alpha}\right)=F^{-}\left(\boldsymbol{\alpha}\right)$, we have immediately $F^{+}\left(\boldsymbol{\alpha}\right)=0$ and $F^{-}\left(\boldsymbol{\alpha}\right)>0$.

Observe that if $\sum_{i=1}^{N}\alpha_{i}=0$ then $F^{+}\left(\boldsymbol{\alpha}\right)=F^{-}\left(\boldsymbol{\alpha}\right)=0$ in Cases 2 and 3.

\subsection{The function $F^+$ when $N=3$}\label{app:F_index3}
Note that $F^{+}\left(\boldsymbol{\alpha}\right)$ is invariant under
permutations of $\alpha_{i}$, $i=1,\ldots,N$. In particular, for
$N=3$, it follows that

\[
F^{+}\left(\alpha_{1},\alpha_{2},\alpha_{3}\right)=\begin{cases}
+\infty, & \textrm{if }\min\left\{ \alpha_{1},\alpha_{2},\alpha_{3}\right\} \geq0\\
0, & \textrm{if }\alpha_{1}+\alpha_{2}+\alpha_{3}\leq 0\\
-\dfrac{\alpha_{1}+\alpha_{2}+\alpha_{3}}{\min\left\{ \alpha_{1},\alpha_{2},\alpha_{3}\right\} }, & \textrm{if }\min\left\{ \alpha_{1},\alpha_{2},\alpha_{3}\right\} <0\textrm{ and }\alpha_{1}+\alpha_{2}+\alpha_{3}\geq 0
\end{cases}
\]
and 
\[
F^{-}\left(\alpha_{1},\alpha_{2},\alpha_{3}\right)=\begin{cases}
+\infty, & \textrm{if }\max\left\{ \alpha_{1},\alpha_{2},\alpha_{3}\right\} \leq0\\
0, & \textrm{if }\alpha_{1}+\alpha_{2}+\alpha_{3}\geq 0\\
-\dfrac{\alpha_{1}+\alpha_{2}+\alpha_{3}}{\max\left\{ \alpha_{1},\alpha_{2},\alpha_{3}\right\} }, & \textrm{if }\max\left\{ \alpha_{1},\alpha_{2},\alpha_{3}\right\} >0\textrm{ and }\alpha_{1}+\alpha_{2}+\alpha_{3}\leq 0
\end{cases}
\]
such that
\[
F^{\textrm{index}}\left(\alpha_{1},\alpha_{2},\alpha_{3}\right)=\begin{cases}
+\infty, & \textrm{if }\min\left\{ \alpha_{1},\alpha_{2},\alpha_{3}\right\} \geq0\\
-\infty, & \textrm{if }\max\left\{ \alpha_{1},\alpha_{2},\alpha_{3}\right\} \leq0\\
0, & \textrm{if }\alpha_{1}+\alpha_{2}+\alpha_{3}=0\\
\dfrac{\alpha_{1}+\alpha_{2}+\alpha_{3}}{\max\left\{ \alpha_{1},\alpha_{2},\alpha_{3}\right\} }, & \textrm{if }\max\left\{ \alpha_{1},\alpha_{2},\alpha_{3}\right\} >0\textrm{ and }\alpha_{1}+\alpha_{2}+\alpha_{3}<0\\
-\dfrac{\alpha_{1}+\alpha_{2}+\alpha_{3}}{\min\left\{ \alpha_{1},\alpha_{2},\alpha_{3}\right\} }, & \textrm{if }\min\left\{ \alpha_{1},\alpha_{2},\alpha_{3}\right\} <0\textrm{ and }\alpha_{1}+\alpha_{2}+\alpha_{3}>0.
\end{cases}
\]

\begin{thebibliography}{99}

\bibitem{AC}
Aguiar MAD, Castro SBSD.
Chaotic switching in a two-person game. Phys D. 2010; 239: 1598--1609.

 \bibitem{AP}
Ashwin P,  Postlethwaite C. 
On designing heteroclinic networks from graphs. Phys D. 2013; 265(1): 26--39.

\bibitem{Brannath}
Brannath W.
Heteroclinic networks on the tetrahedron. Nonlinearity. 1994; 7: 1367--1384.

\bibitem{CastroLohse2014}
Castro SBSD, Lohse A.
Stability in simple heteroclinic networks in $\R^4$. Dyn Syst. 2014; 29(4): 451--481.

\bibitem{Field}
Field MJ.
Heteroclinic Networks in Homogeneous and Heterogeneous Identical Cell Systems.
J Nonlinear Sci. 2015; 25: 779--813.

\bibitem{GSC_RSP}
Garrido-da-Silva L, Castro SBSD.
Cyclic dominance in a 2-person Rock-Scissors-Paper game. 2016; [39 pages]
arXiv:1607.08748. 

\bibitem{HS}
Hofbauer J, Sigmund K.
Evolutionary Games and Population Dynamics.
Cambridge: Cambridge University Press; 2002.

\bibitem{HS2003}
Hofbauer J, Sigmund K.
Evolutionary Game Dynamics.
 Bull Amer Math Soc. 2003; 40: 479--519.
 
 \bibitem{HopkinsSeymour}
 Hopkins E, Seymour RM.
 The stability of price dispersion under seller and consumer learning.
 Int Econ Rev. 2002; 43 (4):1157--1190.

\bibitem{Keller}
Keller G.
Stability index for chaotically driven concave maps.
J London Math Soc. 2014; 89 (2): 603--622.

\bibitem{KS}
Kirk V, Silber M.
A competition between heteroclinic cycles.
Nonlinearity. 1994;7: 1605--1621.

\bibitem{KrupaMelbourne95a}
Krupa M, Melbourne I.
Asymptotic stability of heteroclinic cycles in systems with symmetry.
Ergod Th Dyn Sys. 1995; 15: 
{121--147}.

\bibitem{KrupaMelbourne95b}
Krupa M, Melbourne I.
{Nonasymptotically stable attractors in $O(2)$ mode interactions}.
Fields Inst Comm. 1995; 4:
{219--131}.

\bibitem{KrupaMelbourne2004}
Krupa M, Melbourne I.
Asymptotic stability of heteroclinic cycles in systems with symmetry II.
Proc Royal Soc Edin. 2004; 134:
{1177--1197}.

\bibitem{Lohse2015}
Lohse A.
{Stability of heteroclinic cycles in transverse bifurcations},
Phys D. 2015; 310: {95--103}.
 
 \bibitem{Lohse2015a}
Lohse A.
 Unstable attractors: existence and stability indices.
Dyn Syst. 2015; 30 (3): {324--332}.

\bibitem{Melbourne1991}
Melbourne I.
An example of a non-asymptotically stable attractor. Nonlinearity. 1991; 4:
{835--844}.

\bibitem{RoslanAshwin}
Mohd Roslan U.A., Ashwin P.
Local and global stability indices for a riddled basin attractor of a piecewise linear map. Dyn Syst. 2016; 31 (3):
{375--392}.

\bibitem{Podvigina2012}
Podvigina O.
Stability and bifurcations of heteroclinic cycles of type $Z$.
Nonlinearity. 2012 25:
{1887--1917}.

\bibitem{Podvigina2013}
Podvigina O.
Classification and stability of simple homoclinic cycles in $\R^5$.
Nonlinearity. 2013; 26:
{1501--1528}.

\bibitem{PC2015}
Podvigina O., Chossat P.
Simple heteroclinic cycles in $\R^4$.
Nonlinearity. 2015; 28:
{901--926}.

\bibitem{PodviginaAshwin2011}
Podvigina O, Ashwin P.
On local attraction properties and a stability index for heteroclinc connections.
Nonlinearity. 2011; 24:
{887--929}.

\bibitem{Postlethwaite2010}
Postlethwaite C.
A new mechanism for stability loss from a heteroclinic cycle.
Dyn.\ Syst. 2010; 25 (3): 305-322.

\bibitem{PostlethwaiteDawes}
Postlethwaite M, Dawes JHP.
Resonance bifurcations from robust homoclinic cycles.
Nonlinearity. 2010; 23: 621-642.

\bibitem{ruelle}
Ruelle D.
Elements of differentiable dynamics and bifurcation theory. New York: Academic Press; 1989.

\bibitem{SAC}
Sato Y, Akiyama E, Crutchfield JP.
Stability and Diversity in Collective Adaptation.
Phys D. 2005; 210: 21--57.

\bibitem{rucklidge}
Szolnoki A, Mobilia M, Jiang L-L, Szczesny B, Rucklidge AM, Perc M.
Cyclic dominance in evolutionary games: A review.
J R Soc Interface.2014; 11: 20140735.

\end{thebibliography}
\end{document}